\documentclass{article}

\usepackage[utf8]{inputenc}
\usepackage{color}
\usepackage{amsthm}
\usepackage{siunitx}
\usepackage{amsfonts}
\usepackage{amsmath}
\usepackage{latexsym}
\usepackage{tikz}
\usepackage{authblk}

\usetikzlibrary{decorations.pathreplacing}

\usepackage{epsfig}
\usepackage{graphicx, url}
\usepackage{cite}
\usepackage{color}
\usepackage{setspace}
\usepackage{caption}
\usepackage{subcaption}

\newtheorem{theorem}{Theorem}

\newtheorem{example}{Example}

\newtheorem{lemma}{Lemma}
\newtheorem{assumption}{Assumption}
\newtheorem{defin}{Definition}

\newtheorem{notation}{Notation}

\newcommand{\R}{\ensuremath{\mathbb{R}}}

\providecommand{\keywords}[1]
{
  \small	
  \textbf{\textit{Keywords---}} #1
}

\title{Stochastic control with self-exciting processes}
\author[1]{Heidar Eyjolfsson}
\author[2]{Kristina Rognlien Dahl}
\affil[1]{Reykjavik University}
\affil[2]{BI Norwegian Business School}
\begin{document}

\maketitle

\begin{abstract}
We analyze the problem of stochastic optimal control of SDEs where the driver includes a self-exciting stochastic process. Due to the non-Markovian nature of the problem, we apply the stochastic maximum principle approach. We derive a sufficient stochastic maximum principle under this framework. We also derive an expression via martingales of both the self-exciting process and its quadratic covariation. Furthermore, we derive a necessary maximum (equivalence principle) for the self-exciting stochastic control problem. Finally, we look at an application to log-utility.
\end{abstract}

\keywords{Self-exciting processes, Stochastic optimal control, Stochastic maximum principle, Equivalent maximum principle, Log-utility application}




\section{Introduction}
In this paper, we study stochastic optimal control of stochastic differential equations where the noise includes a self-exciting stochastic process. Since this self-exciting process is non-Markovian, we study the stochastic control problem via the maximum principle approach.

Hawkes \cite{Hawkes1} was the first to study self-exciting stochastic processes. The initial applications were in seismology. In recent time, variations of self-exciting processes have been used to model group behaviour in social media (see Rizoiu et al. \cite{R}), for predicting crime (see Mohler \cite{Mohler}) and for financial applications (see e.g. Bacry et al. \cite{Bacry} and Embrechts et al. \cite{Embrechts}). 

In this paper, our definition of self-exciting processes, follows Eyjolfsson and Tj\o stheim \cite{EyjolfssonTjostheim} and later Dahl and Eyjolfsson \cite{DahlEyjolfsson} as well as Ahmed and Eyjolfsson \cite{ahmed2026pricing}. The self-exciting process counts the number of occurred shocks (sometimes called events) at any given time. Associated to the self-exciting process is an intensity process, denoted by $\lambda = \{\lambda_t\}_{t \geq 0}$. This intensity process determines the probability of events occurring in the infinitesimal interval $(t,t+dt)$ conditioned on the information at time $t$. We remark that the self-exciting processes considered in this paper are different from Hawkes processes, see Hawkes \cite{Hawkes1} and Hawkes and Oakes \cite{HawkesOakes}. The difference is that the intensity verifies an SDE and the stochastic jump size of the self-exciting process may depend on the current value of the intensity process.

Stochastic optimal control has been widely studied over the past few decades, see e.g. Peng \cite{Peng} for the fundamental Itô diffusion case, or Framstad, Øksendal and Sulem \cite{Framstad} for the generalized Itô jump diffusion case. Based on these fundamental works, a large number of papers on stochastic control have emerged, where authors study different kinds of SDEs, SPDEs, jumps vs. no jumps, delay, memory and so on. Some recent works include Agram and {\O}ksendal \cite{AgramOksendal}, Makhlouf et al. \cite{Makhlouf} and Majee \cite{majee2023stochastic}. However, to the best of our knowledge, the problem of this paper, namely stochastic optimal control for SDEs with self-exciting driver, has not been analyzed before.

The paper is structured as follows: In Section \ref{sec: framework} we introduce the framework for self-exciting stochastic processes. This is the same framework as in Eyjolfsson and Tj\o stheim \cite{EyjolfssonTjostheim}, see also Eyjolfsson and Tj\o stheim \cite{ET23} for a multivariate version, and later Dahl and Eyjolfsson \cite{DahlEyjolfsson}. We include it here for the sake of completeness.  In Section \ref{sec: problem}, we introduce the stochastic optimal control problem with a self-exciting process as the driver of the SDE. We define a Hamiltonan based on this SDE, as well as the corresponding adjoint BSDE. In Section \ref{sec: lemmas}, we state and prove some lemmas that are needed in order to prove maximum principles for the self-exciting stochastic control problem. In particular, we prove that the self-exciting process and its quadratic covariation can be expressed via martingales. In Section \ref{sec: sufficient}, we prove a sufficient maximum principle for the self-exciting stochastic control problem. Then, in Section \ref{sec: necessary}, we prove a necessary maximum principle, also called an equivalence principle, for the self-exciting control problem. Finally, in Section \ref{sec:Application} we explore a log-utility application.

\section{The framework: Self-exciting stochastic processes}
\label{sec: framework}

The following definition of self-exciting processes is the same as in Eyjolfsson and Tj\o stheim \cite{EyjolfssonTjostheim}, see also Eyjolfsson and Tj\o stheim \cite{ET23} for a multivariate version, and later Dahl and Eyjolfsson \cite{DahlEyjolfsson}. It is included here for the sake of completeness. 

The self-exciting process is a counting process, counting the number of shocks that have occurred at a given time point. Let $(\Omega,\mathcal{F})$ be a measurable space and $\{T_n\}_{n \geq 1}$ be a point process with values in $\R_+$. We assume that the sequence $\{T_n\}_{n \geq 1}$ is non-negative and non-decreasing. Hence, $0 \leq T_1 \leq T_2 \leq \cdots$. This sequence represents times of successive shocks or events. Associated to the point process, there is a counting process $N_t$ which is defined by
\begin{equation}\label{def:N}
N_t := \sum_{n \geq 1} 1_{\{T_n \leq t\}}, \quad \mbox{ where } t \geq 0.
\end{equation}
This counting process counts the jumps of the point process. The rate at which jumps occur is governed by an intensity process, to be defined. We identify a point process with its counting process, see equation \eqref{def:N}, and define
\begin{equation*}
\mathcal{F}_t^N := \sigma\{N_s : 0 \leq s \leq t \}, \quad \mbox{ where } t \geq 0.
\end{equation*}
Hence, $\{\mathcal{F}_t^N\}_{t \geq 0}$ is the filtration generated by the counting process. 

Consider a point process adapted to some filtration $\{\mathcal{F}_t\}_{t \geq 0}$, with $\mathcal{F}_t^N \subset \mathcal{F}_t$ for all $t \geq 0$. Suppose that the corresponding counting process $N_t$ admits a c\`adl\`ag  $\{\mathcal{F}_t\}_{t \geq 0}$-adapted intensity process $\lambda_t$, such that  
\begin{equation}
\label{eq: intensity}
E\left[\int_0^\infty  f(s) dN_s \right] = E\left[\int_0^\infty f(s) \lambda_sds \right],
\end{equation}
holds for all predictable $f: \Omega \times \R_+ \to [-\infty,\infty]$, where $E$ denotes the expected value with respect to a given probability measure on $(\Omega,\mathcal{F})$. In particular, this means the the process $t \mapsto N_t - \lambda_t$ is a martingale. Furthermore, equation \eqref{eq: intensity} implies that the intensity process, $\lambda_t$, determines the probability of shocks occurring in the infinitesimal interval $(t,t+dt)$ conditioned on $\mathcal{F}_t$. If the intensity is constant, i.e., $\lambda_t = \lambda_0 > 0$, holds  for all $t \geq 0$, then $N_t$ is a standard homogeneous Poisson process with intensity $\lambda_0$. 

Let $\{Y_i\}_{i \in \mathbb{N}}$ be a family of random variables. We assume that $Y_i$ has the probability distribution $\nu(\lambda_{T_i-},\cdot)$, for a given family $\{\nu(\lambda,\cdot)\}_{\lambda > 0}$ of probability distributions, and $t- := \lim_{s \uparrow t} s$. That is, we let the value of the intensity process right before the jump affect the jump size distribution. Then, we define the stochastic jump process, 
\begin{equation}\label{def:U}
U_t := \sum_{i=1}^{N_t} Y_{i},
\end{equation}
where $\{N_t\}_{t \geq 0}$ is the counting process defined in equation \eqref{def:N}. We introduce the stochastic differential equation (SDE)

\begin{equation}\label{def:sde}
d\lambda_t = \mu(\lambda_t)dt +  \beta dU_t,
\end{equation}

\noindent $\lambda(0) = \lambda_0$, where $\beta \in \R$ is a constant and we assume that $\mu:\R_+ \to \R$, is Lipschitz continuous. 

\begin{defin}(SDE-driven self-exciting jump process)\label{def: self-exciting}
An SDE-driven self-exciting jump process, $U=\{U_t\}_{t \geq 0}$, is a stochastic jump process \eqref{def:U}, where the associated counting process, $N=\{N_t\}_{t \geq 0}$ admits the intensity $\lambda=\{\lambda_t\}_{t \geq 0}$, given by the SDE \eqref{def:sde}, with jump-sizes $\{Y_i\}_{i\geq1}$, which follow the probability distribution $\nu(\lambda_{T_i-},\cdot)$. Here, $\{\nu(\lambda,\cdot)\}_{\lambda > 0}$ is a family of probability distributions, and $\nu(\lambda,\cdot)$ is supported on $[\lambda_0-\lambda,\infty)$.
\end{defin}

In the next section, we define a stochastic control problem where a self-exciting jump process is the driver of the SDE.

\section{The self-exciting stochastic optimal control problem}
\label{sec: problem}

In the following let $U = U_t(\omega); (t, \omega) \in [0, \infty) \times \Omega$ be a self-exciting stochastic process, as defined in Definition \ref{def: self-exciting}. Also, let $B = B_t(\omega); (t, \omega) \in [0, \infty) \times \Omega$ be a Brownian motion on a complete filtered probability space $(\Omega, \mathcal{F}, \{\mathcal{F}_t\}_{t \geq 0}, P)$. We assume that $\mathbb{F} := \{\mathcal{F}_t\}_{t \geq 0}$ is the filtration generated by $B$, $N$ and $U$ (augmented with the $P$-null sets). 

Assume that $\mathcal{V} \subset \mathbb{R}$ and let $b, \sigma, \gamma$ be functions defined as follows:
\begin{align}b : [0,T] \times \mathbb{R} \times \mathcal{V} \times \Omega \rightarrow \mathbb{R},\\ 
\sigma : [0,T] \times \mathbb{R}  \times \mathcal{V} \times \Omega \rightarrow \mathbb{R},\\
\gamma : [0,T] \times \mathbb{R} \times \mathcal{V} \times \Omega \rightarrow \mathbb{R}.
\end{align} 
Note that we usually do not write the scenario $\omega$ (for notational convenience).

Consider the following stochastic differential equation with a self-exciting process as a driving noise:

\begin{equation}
\label{eq: SDE_SE}
    \begin{array}{llll}
        dX_t &=& b(t,X_t,\pi_t)dt + \sigma(t,X_t, \pi_t)dB_t + \gamma(t-,X_{t-},\pi_{t-})dU_t \\
         X_0 &=& x_0.  
    \end{array}
\end{equation}

This is the state equation. Here, the process $\pi = \{\pi_t(\omega)\}_{t \geq 0}$ is the control of the agent. We denote the set of admissible controls  by $\mathcal A$. We assume that this is some specified set of c\`{a}dl\`{a}g processes in $L^2(\Omega\times[0,T])$, with values in a subset $\mathcal V$ of $\mathbb R$. This implies that the usual conditions hold (in the sense of Protter \cite{Protter}).

We impose the following set of assumptions on the coefficient functions: 
\begin{assumption}
\label{assumption: existence_uniqueness}

\begin{enumerate}
\item\label{hyp:C1} The functions $(x,\pi) \mapsto b(t,x, \pi, \omega)$, $(x,\pi) \mapsto \sigma(t, x, \pi, \omega)$ and $(x,\pi) \mapsto \gamma(t, x, \pi, \omega)$ 
are assumed to be $C^1$ for fixed $t, \omega$ with locally bounded derivatives.

\item \label{hyp:measurable} The functions  $t \mapsto b(t, x,\pi)$, $t \mapsto \sigma(t, x,\pi)$ and $t \mapsto \gamma(t, x, \pi)$ are predictable, for each $(x,\pi) \in \mathbb{R} \times \mathcal{V}$.

\item \emph{Lipschitz condition:} The functions $b$, $\sigma$ and $\gamma$ 
are uniformly Lipschitz continuous in the variable $x$ for each $\pi \in \mathcal{V}$, with the Lipschitz constant  independent of the variables $t, \pi, \omega$. 

\end{enumerate}
\end{assumption}
Assumption  \ref{assumption: existence_uniqueness}.$\ref{hyp:C1}$ and Assumption \ref{assumption: existence_uniqueness}.$\ref{hyp:measurable}$ guarantee that the integrands in our state equation (\ref{eq: SDE_SE})
are predictable whenever $X$ is c\`{a}dl\`{a}g and adapted. Note also that linear growth is implied from the uniform Lipschitz-condition.

\medskip

\begin{theorem}
\label{thm: existence_unique}(Existence and uniqueness of solution to the state equation \eqref{eq: SDE_SE})
For each $\pi \in \mathcal{A}$, 
there exists a unique c\`{a}dl\`{a}g adapted global solution $X=X^{\pi}\in L^2(\Omega\times[0,T])$ to the state equation (\ref{eq: SDE_SE})  
\end{theorem}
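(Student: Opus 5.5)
The plan is to view \eqref{eq: SDE_SE} as an SDE driven by the semimartingale $Z_t=(t,B_t,U_t)$ and to invoke the general existence and uniqueness theorem for SDEs with functional Lipschitz coefficients, namely Protter \cite{Protter}, Chapter V, Theorem 7. Take an admissible $\pi\in\mathcal A$ and set
\[
F^1(X)_t=b(t,X_{t},\pi_t),\quad F^2(X)_t=\sigma(t,X_{t},\pi_t),\quad F^3(X)_t=\gamma(t,X_{t},\pi_t).
\]
The first step is to check the hypotheses of that theorem. $U$ is a finite-variation process: on $[0,T]$ it has $N_T<\infty$ jumps a.s., since $N$ admits a locally integrable intensity $\lambda$ by \eqref{eq: intensity}. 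Hence $Z$ is a semimartingale. By Assumption \ref{assumption: existence_uniqueness}.\ref{hyp:measurable} and continuity in $(x,\pi)$, each $F^i$ maps c\`adl\`ag adapted processes to c\`adl\`ag adapted ones, because $\pi$ is c\`adl\`ag. The uniform Lipschitz condition gives $|F^i(X)_t-F^i(Y)_t|\le K|X_t-Y_t|$, so each $F^i$ is functional Lipschitz in Protter's sense. Protter's integrals use the left limits $F^i(X)_{t-}$, which is exactly the form $\gamma(t-,X_{t-},\pi_{t-})dU_t$ in the state equation. Protter's theorem then gives a unique c\`adl\`ag adapted solution on $[0,\infty)$, and in particular on $[0,T]$.

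The second step is the $L^2(\Omega\times[0,T])$ bound, which is where the real work lies. Protter's theorem gives no integrability, and $U$ need not be square integrable. I would localize with stopping times $\tau_n=\inf\{t:N_t\ge n \text{ or } |X_t|\ge n\}$. Between jumps, $X$ solves a Brownian SDE with Lipschitz, linear-growth coefficients. Using $|b(t,x,\pi)|\le C(1+|x|+|b(t,0,\pi)|)$, and similarly for $\sigma$ and $\gamma$, Itô's isometry and Gronwall give a bound in terms of $E\int_0^T(|b(t,0,\pi_t)|^2+|\sigma(t,0,\pi_t)|^2)dt$. I would need to assume, implicitly as part of admissibility, that this quantity is finite.

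The jump term contributes $\sum_{T_i\le t}\gamma(T_i-,X_{T_i-},\pi_{T_i-})Y_i$. I would write $dU_t=d\tilde U_t+\bar y(\lambda_{t-})\lambda_{t-}dt$, where $\bar y(\lambda)=\int y\,\nu(\lambda,dy)$ and $\tilde U$ is the compensated martingale. The martingale part is then controlled by $E\int|\gamma|^2\int y^2\nu(\lambda_{t-},dy)\lambda_{t-}dt$, and the drift part by Cauchy--Schwarz.

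The main obstacle is the moment control of $\lambda$ and of the jump sizes. Closing Gronwall needs $\sup_{t\le T}E\big[\lambda_t\int y^2\nu(\lambda_t,dy)\big]$ to be bounded, or at least the product with $|X|^2$ to be controlled, for instance by a bound on the second moments of $\nu$ that is uniform in $\lambda$. If such bounds are unavailable, I would first try conditioning on the jump process: given the path of $(N,U)$, the jumps are finitely many and $X$ is built piecewise from Brownian SDE solutions. Each jump multiplies the $L^2$ norm by at most $1+K|Y_i|$, giving a pathwise bound of the form $\prod_{i\le N_T}(1+K|Y_i|)^2e^{CT}$. Its expectation would then be controlled by an exponential moment condition on $N_T$ and the $Y_i$.

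Finally, uniqueness passes to the $L^2$ class trivially, since uniqueness already holds in the class of all c\`adl\`ag adapted processes. Letting $n\to\infty$ in the localized bound by Fatou's lemma gives $X^\pi\in L^2(\Omega\times[0,T])$.
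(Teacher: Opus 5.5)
Your first step is the paper's proof. The paper disposes of the theorem in one sentence by appealing to the existence and uniqueness theory for semimartingale SDEs with Lipschitz coefficients \cite{Protter,CohenElliott}. Your reduction to the driver $Z=(t,B,U)$, with pointwise and hence functional Lipschitz coefficient operators, is a faithful unpacking of that citation.

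One technical point needs fixing. Protter's Theorem V.7 requires the operators to map c\`adl\`ag adapted processes to c\`adl\`ag adapted processes. Assumption \ref{assumption: existence_uniqueness}.\ref{hyp:measurable} only makes $t\mapsto b(t,x,\pi)$, $\sigma$, $\gamma$ predictable, not c\`adl\`ag. So your claim that $t\mapsto b(t,X_t,\pi_t)$ is c\`adl\`ag does not follow. Either assume c\`adl\`ag dependence on $t$, or use a version of the theorem for predictable coefficients that are Lipschitz in $x$ (as in \cite{CohenElliott}, Ch.~16). The latter is what the paper's remark about predictable integrands points to.

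On the $L^2(\Omega\times[0,T])$ claim, the paper gives no argument at all. Your second step is, as you say yourself, conditional on hypotheses absent from the paper:
\begin{itemize}
\item square integrability of $b(\cdot,0,\pi)$, $\sigma(\cdot,0,\pi)$, $\gamma(\cdot,0,\pi)$ along the control (the paper's remark that linear growth follows from the Lipschitz condition silently needs this);
\item moment control of $\lambda$ and $\nu$.
\end{itemize}
So it does not prove the statement as written. But no argument can, because that part of the statement is false under the stated assumptions.

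\emph{Heavy-tailed jumps.} Take $\mu=0$ and $\beta=0$ in \eqref{def:sde}, so $N$ is Poisson. Let $\nu(\lambda_0,\cdot)$ be a law on $[0,\infty)$ with infinite second moment, which Definition \ref{def: self-exciting} permits. Let $b=\sigma=0$ and $\gamma\equiv 1$. Then $X_t=x_0+U_t$ has $E[X_t^2]=\infty$ for every $t>0$.

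\emph{Bounded jumps.} Take $\mu=0$, $\beta>0$ and $Y_i\equiv y>0$. Then $\lambda_t=\lambda_0+\beta yN_t$, so $N$ is a linear birth process with immigration. Hence $N_T$ is negative binomial, with tail decaying geometrically at rate $1-e^{-\beta yT}$. With $\gamma(t,x,\pi)=Kx$ we get $X_t=x_0(1+Ky)^{N_t}$. Once $(1+Ky)^2(1-e^{-\beta yT})>1$, we have $E[X_t^2]=\infty$ for all $t$ near $T$, so $X\notin L^2(\Omega\times[0,T])$.

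So your exponential-moment condition is exactly the kind of hypothesis that is needed. The right fix is to add such integrability conditions explicitly to the theorem, rather than tucking them ``implicitly'' into admissibility. With those hypotheses in place, your localization, compensation and Gronwall argument is a sensible route to the estimate.
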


\begin{proof}
Given Assumption \ref{assumption: existence_uniqueness}.$\ref{hyp:C1}$ and Assumption \ref{assumption: existence_uniqueness}.$\ref{hyp:measurable}$ combined with the the Lipschitz (and hence linear growth) conditions of Assumption \ref{assumption: existence_uniqueness}, the existence of a unique solution to the state equation \eqref{eq: SDE_SE} follows from semi-martingale stochastic integration theory, see Protter \cite{Protter} or Cohen and Elliott \cite{CohenElliott}.
\end{proof}
\medskip

We would like to maximize the following performance functional over all controls $\pi \in \mathcal{V}$, given the SDE \eqref{eq: SDE_SE} for the state process $X$,

\begin{equation}
\label{eq: OptControl}
    \max_{\pi \in \mathcal{A}} \quad J(\pi) = E[\int_0^T h(t,X_t, \pi_t)dt + g(X_T)].
\end{equation}

\noindent Here, $E[\cdot]$ denotes expectation with respect to $P$ and $$h :[0,T] \times \mathbb{R} \times \mathcal{V} \times \Omega \rightarrow \mathbb{R}$$ and $$g : \mathbb{R} \times \Omega \rightarrow \mathbb{R}$$ are given $C^1$ functions, satisfying the following assumptions:

\begin{assumption}
\label{assumption: performance}
The functions $h$ and $g$ satisfy the following:
 \begin{enumerate}
\item  The functions $ (x,\pi) \mapsto h(t,x,\pi, \omega)$ and $x \mapsto g(x, \omega)$ are $C^1$ for each $t,\omega$.
 \item The function $ t \mapsto h(t, x, \pi)$ is predictable and $ g(x)$ is $\mathcal F_T$-measurable.
\end{enumerate}
\end{assumption}

Because the self-exciting process, $U$, is non-Markovian, we use the stochastic maximum principle approach for the optimization. To do so, we define the Hamiltonian function

\begin{equation}
\label{eqref: Hamiltonian}
\begin{array}{lllll}
    \mathcal{H}(t,x,\pi,p,q,w) &=& h(t,x,\pi) + \big( b(t,x,\pi) + \sum_{i=1}^{N_t} Y_i \boldsymbol{1}_{(T_{i-1},T_i]}(t) \lambda_t \gamma(t,x,\pi) \big) p_t \\[\medskipamount]
     && + \sigma(t,x,\pi)q_t + \lambda_t \big(\sum_{i=1}^{N_t} Y_i^2 \boldsymbol{1}_{(T_{i-1},T_i]}(t) \gamma(t,x,\pi) \\[\medskipamount]
     &&+\sum_{i=1}^{N_t} Y_i \boldsymbol{1}_{(T_{i-1},T_i]}(t)x \big)w_t 
\end{array}
\end{equation}

\noindent To ease notation, we introduce the following shorthand:

\begin{notation}
\label{not: shorthand}
We define the following shorthand notation for some given control $\pi$ with corresponding solution $X$ to the state equation \eqref{eq: SDE_SE}:\newline
$$b_t := b(t, X_t, \pi_t), \sigma_t := \sigma(t, X_t, \pi_t),\gamma_{t} := \gamma(t,X_{t^{-}}, \pi_{t^{-}}),h_t := h(t, X_t, \pi_t),$$ $$\frac{\partial b_t}{\partial x} := \frac{\partial b}{\partial x}(t, X_t, \pi_t),\frac{\partial \sigma_t}{\partial x} := \frac{\partial \sigma}{\partial x}(t, X_t, \pi_t),\frac{\partial \gamma_{t}}{\partial x} :=\frac{\partial \gamma}{\partial x} (t,X_{t^{-}}, \pi_{t-}),$$
$$\frac{\partial h_t}{\partial x} := \frac{\partial h}{\partial x}(t, X_t, \pi_t), \frac{\partial \mathcal{H}_t}{\partial x}:=\frac{\partial \mathcal{H}}{\partial x}(t, X_{t}, \pi_{t}, {p}_t, {q}_t, {w}_t),$$
\end{notation}

In equation \eqref{eqref: Hamiltonian}, $p, q, w$ are the adjoint processes satisfying the following adjoint backward stochastic differential equation (BSDE),

\begin{equation}
\label{eq: BSDE}
    \begin{array}{llll}
        dp_t &=& -\frac{\partial \mathcal{H}_t}{\partial x} dt + q_t dB_t + w_{t-} d U_t \\[\medskipamount]
         p_T &=& g'(X_T). 
    \end{array}
\end{equation}

Note that the BSDE \eqref{eq: BSDE} has the self-exciting process $U_t$ as its driving noise. In general, $U_t$ is not a martingale, only a semi-martingale. This means that, in its current form, standard results on existence and uniqueness of a solution to this equation do not apply. However, this is an active field of research and there are some recent papers on BSDEs with semi-martingale noise. Jeanblanc et al. \cite{jeanblanc2012mean} consider a stochastic control problem which leads to a particular kind of BSDEs with non-standard noise: Elmansouri and El Otmani \cite{elmansouri2023generalized} consider BSDEs where the noise is given by an c\`adl\`ag martingale. Dumitrescu et al. \cite{dumitrescu2016bsdes} consider BSDEs with one single default jump and Papapantoleon et al. \cite{papapantoleon2018existence} give existence and uniqueness results for BSDEs driven by a general martingale. 

In the forthcoming Lemma \ref{lemma: martingales}, we will show that it is possible to express $U_t$ via a martingale. This characterization may be used to rewrite the semi-martingale BSDE (\ref{eq: BSDE}) into a BSDE involving a drift term, a Brownian noise term and a martingale term. Existence and uniqueness results for such equations have been studied by Papapantoleon et al. \cite{papapantoleon2018existence} and Elmansouri and El Otmani \cite{elmansouri2023generalized}. Note that the BSDE \eqref{eq: BSDE} has a linear structure because

\begin{equation}
\label{eq:dHdx}
\begin{array}{llll}
\frac{\partial \mathcal{H}}{\partial x} &=& \frac{\partial h}{\partial x} +\big(\frac{\partial b}{\partial x} + \sum_{i=1}^{N_t}Y_i\boldsymbol{1}_{(T_{i-1},T_i]}(t)\lambda\frac{\partial \gamma}{\partial x}\big)p_t \\[\medskipamount] 
&&+ \frac{\partial \sigma}{\partial x} q_t + \lambda_{t} w_{t} \sum_{i=1}^{N_t} (Y_i + Y_i^2\frac{\partial \gamma}{\partial x}) \boldsymbol{1}_{(T_{i-1},T_i]}(t) 
\end{array}
\end{equation}
\noindent which is linear in $p_t$, $q_t$ and $w_t$.

\section{A sufficient stochastic maximum principle}
\label{sec: sufficient}
In this section, we prove a sufficient maximum principle for self-exciting stochastic optimal control. 

Under this framework, we can prove the following sufficient maximum principle. In order to prove this result, we will need some lemmas. We state the sufficient maximum theorem here, the we state and prove the required lemmas in Section \ref{sec: lemmas} before proving the sufficient maxmimum principle in Section \ref{sec: sufficient_proof}.

\begin{theorem} (The sufficient maximum principle)
\label{thm: suff_max_princ}
Let $\hat{\pi}$ be an admissible performance strategy with corresponding solution $\hat{X}$ of the self-exciting state SDE \eqref{eq: SDE_SE} and assume that $(\hat{p}, \hat{q}, \hat{w})$ solves the corresponding adjoint BSDE \eqref{eq: BSDE}. Furthermore, assume that
\begin{enumerate}
    \item $g$ and $(x, \pi) \mapsto \mathcal{H}(t,x,\pi,\hat{p},\hat{q}, \hat{w})$ are concave a.s. and
    \item $\max_{\pi \in \mathcal{V}} \mathcal{H}(t,\hat{X},\pi,\hat{p},\hat{q},\hat{w}) = \mathcal{H}(t, \hat{X}, \hat{\pi}, \hat{p}, \hat{q}, \hat{w})$ \quad $dt \times P$ \mbox{ a. s.}
\end{enumerate}
Then, $\hat{\pi}$ is an optimal control for the self-ecxciting stochastic optimal control problem \eqref{eq: OptControl}.
\end{theorem}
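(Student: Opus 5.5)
We follow the standard route for sufficient maximum principles (as in Framstad, Øksendal and Sulem \cite{Framstad}). Fix an arbitrary $\pi \in \mathcal{A}$ with state $X = X^{\pi}$. Write $J(\pi) - J(\hat{\pi}) = I_1 + I_2$, where
\[
I_1 = E\Big[\int_0^T (h_t - \hat{h}_t)\,dt\Big], \qquad I_2 = E[g(X_T) - g(\hat{X}_T)].
\]
The goal is to show $I_1 + I_2 \le 0$.

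\emph{Terminal term.} By concavity of $g$ and $\hat{p}_T = g'(\hat{X}_T)$, we get $I_2 \le E[\hat{p}_T(X_T - \hat{X}_T)]$. We expand $\hat{p}_T(X_T-\hat{X}_T)$ by the semimartingale integration by parts formula on $[0,T]$, using $X_0=\hat{X}_0$. This gives three contributions: $\int \hat{p}_{t-}\,d(X_t-\hat{X}_t)$, $\int (X_{t-}-\hat{X}_{t-})\,d\hat{p}_t$, and the covariation $[\hat{p}, X-\hat{X}]_T$.

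The covariation consists of the Brownian part $\int (\sigma_t-\hat{\sigma}_t)\hat{q}_t\,dt$ plus the jump part $\sum_{s\le T} \hat{w}_{s-}(\gamma_s-\hat{\gamma}_s)(\Delta U_s)^2$. In the stochastic integrals the $dU$-terms appear as
\[
\int \hat{p}_{t-}(\gamma_t-\hat{\gamma}_t)\,dU_t \quad\text{and}\quad \int (X_{t-}-\hat{X}_{t-})\hat{w}_{t-}\,dU_t .
\]

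\emph{Compensation step (main obstacle).} Since $U$ is not a martingale, the $dU$- and $d[U,U]$-integrals do not vanish in expectation. Here we invoke the lemmas of Section~\ref{sec: lemmas}, which express $U$ and its quadratic variation via martingales. On $(T_{i-1},T_i]$ the compensator of $dU_t$ is $Y_i\lambda_t\,dt$, and that of $d[U,U]_t$ is $Y_i^2\lambda_t\,dt$. This is exactly the bookkeeping encoded in the terms $\sum_i Y_i \boldsymbol{1}_{(T_{i-1},T_i]}(t)\lambda_t(\cdot)$ and $\sum_i Y_i^2 \boldsymbol{1}_{(T_{i-1},T_i]}(t)\lambda_t(\cdot)$ of the Hamiltonian. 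After substituting, every $dB$ and martingale integral has zero expectation, assuming the usual square-integrability of the integrands; we impose this as a standing integrability condition, or else localize with stopping times and pass to the limit.

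This step is the delicate one for two reasons. First, the lemma's compensator must match precisely the coefficients in $\mathcal{H}$. Second, the integrability needed for the true-martingale property must be justified. We expect to check both carefully.

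\emph{Collecting terms.} After compensation,
\[
E[\hat{p}_T(X_T-\hat{X}_T)] = E\Big[\int_0^T \Big(\hat{p}_t\big(b_t-\hat{b}_t + \Lambda^{(1)}_t(\gamma_t-\hat{\gamma}_t)\big) + \hat{q}_t(\sigma_t-\hat{\sigma}_t) + \hat{w}_t\big(\Lambda^{(2)}_t(\gamma_t-\hat{\gamma}_t) + \Lambda^{(1)}_t(X_t-\hat{X}_t)\big) - (X_t-\hat{X}_t)\frac{\partial \hat{\mathcal{H}}_t}{\partial x}\Big)dt\Big],
\]
where $\Lambda^{(k)}_t=\lambda_t\sum_i Y_i^k\boldsymbol{1}_{(T_{i-1},T_i]}(t)$. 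We add $I_1$ and use the definition of $\mathcal{H}$. The $w$-term in $X$ (the $\Lambda^{(1)}_t x\,w_t$ part of $\mathcal{H}$) is designed precisely to absorb the $[\hat{w}\cdot U, X]$-type cross term. We then obtain
\[
J(\pi)-J(\hat{\pi}) \le E\Big[\int_0^T \Big(\mathcal{H}(t,X_t,\pi_t,\hat{p},\hat{q},\hat{w}) - \mathcal{H}(t,\hat{X}_t,\hat{\pi}_t,\hat{p},\hat{q},\hat{w}) - \frac{\partial \hat{\mathcal{H}}_t}{\partial x}(X_t-\hat{X}_t)\Big)dt\Big].
\]

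\emph{Concavity and maximality.} Concavity of $(x,\pi)\mapsto\mathcal{H}$ gives
\[
\mathcal{H}-\hat{\mathcal{H}} \le \frac{\partial \hat{\mathcal{H}}}{\partial x}(X-\hat{X}) + \frac{\partial \hat{\mathcal{H}}}{\partial \pi}(\pi-\hat{\pi}).
\]
Condition 2 says that $\hat{\pi}$ maximizes $\pi\mapsto\mathcal{H}(t,\hat{X},\pi,\ldots)$, so $\frac{\partial \hat{\mathcal{H}}}{\partial \pi}(\pi-\hat{\pi})\le 0$. If $\hat{\pi}_t$ is on the boundary of $\mathcal{V}$, this holds as a first-order condition for a concave function on a convex set; if $\mathcal{V}$ is not convex, we use directly the supergradient inequality for the concave function. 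Hence the integrand is $\le 0$, so $J(\pi)\le J(\hat{\pi})$ for all $\pi\in\mathcal{A}$, and $\hat{\pi}$ is optimal.
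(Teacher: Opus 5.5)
Your proposal is essentially the paper's proof. It uses the same split $J(\pi)-J(\hat\pi)=I_1+I_2$, concavity of $g$, integration by parts with the covariation of Lemma~\ref{lemma: quadratic_variation}, and replacement of the $dU$ and $d[U]$ integrals by their compensators via Lemma~\ref{lemma: martingales}. That lemma's proof takes $Y_i$ to be $\mathcal{F}_{T_{i-1}}$-measurable, which is what makes $\lambda_t\sum_i Y_i\boldsymbol{1}_{(T_{i-1},T_i]}(t)$ predictable and hence a legitimate compensator density. You then finish with concavity plus maximality of $\mathcal{H}$, exactly as the paper does; you only collect terms before applying concavity, and you correctly use $g'(\hat X_T)$ where the paper writes $g'(X_T)$.

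Drop the aside on non-convex $\mathcal{V}$. Concavity in $\pi$ already presupposes a convex $\mathcal{V}$, and without convexity a maximizer need not satisfy $\frac{\partial \hat{\mathcal{H}}}{\partial \pi}(\pi-\hat\pi)\le 0$ (e.g.\ $\pi\mapsto-(\pi-1)^2$ on $\{0,2\}$ at $\hat\pi=0$). No supergradient inequality can repair that.
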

\medskip

Before we go on to state and show some lemmas needed in the proof of Theorem \ref{thm: suff_max_princ}, note that the theorem says that under its concavity assumptions, we can maximize the Hamiltonian instead of the performance functional in order to solve our self-exciting stochastic control problem.

\subsection{Some lemmas}
\label{sec: lemmas}
In this section, we prove some lemmas that are needed in order to prove Theorem \ref{thm: suff_max_princ}.

In the proof of the sufficient maximum principle, Theorem \ref{thm: suff_max_princ}, we need to compute the quadratic covariation of two processes: The adjoint process, $\hat{p}$ and the differences of the state processes, $X-\hat{X}$. This is our first lemma: 

\begin{lemma}(The quadratic covariation)
\label{lemma: quadratic_variation}
For the adjoint process, $\hat{p}$, and the difference in state process, $X-\hat{X}$, the quadratic covariation process is given as follows:

\[
d[\hat{p}, (X-\hat{X})]_t =  \hat{q}_t (\sigma_t - \hat{\sigma}_t)dt + \hat{w}_{t-}(\gamma_{t-} - \hat{\gamma}_{t-})d[U]_t.
\]

\end{lemma}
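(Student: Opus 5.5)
We write both $\hat{p}$ and $X-\hat{X}$ as semimartingales with explicit decompositions and compute the bracket term by term, using bilinearity of the quadratic covariation.

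From the adjoint BSDE \eqref{eq: BSDE}, $d\hat{p}_t = -\frac{\partial \hat{\mathcal{H}}_t}{\partial x}dt + \hat{q}_t dB_t + \hat{w}_{t-}dU_t$. Subtracting the state equations \eqref{eq: SDE_SE} for $\pi$ and $\hat{\pi}$ gives
\[
d(X-\hat{X})_t = (b_t-\hat b_t)dt + (\sigma_t-\hat\sigma_t)dB_t + (\gamma_{t-}-\hat\gamma_{t-})dU_t .
\]
Both processes are therefore sums of a continuous finite-variation part, a continuous local martingale part $\int \cdot\, dB$, and a pure-jump finite-variation part $\int \cdot\, dU$. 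The integrands are predictable by Assumption \ref{assumption: existence_uniqueness}, since they are left limits or functions of predictable data. Here $U$ is a pure-jump process of finite variation on $[0,T]$, because $N_T<\infty$ a.s. and $U$ is a finite sum of jumps.

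We then expand $[\hat{p},X-\hat{X}]$ into nine brackets.

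\begin{itemize}
\item Any bracket involving a $dt$-integral vanishes, since a continuous finite-variation process has zero covariation with any semimartingale.
\item The bracket $[\int \hat q\,dB, \int (\sigma-\hat\sigma)dB]$ equals $\int \hat q_t(\sigma_t-\hat\sigma_t)\,d[B]_t = \int \hat q_t(\sigma_t-\hat\sigma_t)\,dt$. This uses $[B]_t=t$ and the standard rule $[\int H\,dY,\int K\,dZ]=\int HK\,d[Y,Z]$ (Protter).
\item The cross brackets $[B,U]$ vanish. $B$ is continuous and $U$ is pure-jump of finite variation, so $[B,U]_t = \langle B^c,U^c\rangle_t + \sum_{s\le t}\Delta B_s\Delta U_s = 0$.
\item The remaining jump--jump bracket is $\int \hat w_{t-}(\gamma_{t-}-\hat\gamma_{t-})\,d[U]_t$, again by the integral rule for brackets.
\end{itemize}

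Summing the surviving terms gives the claimed differential identity.

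The main obstacle is justifying the bracket rule and the vanishing of $[B,U]$ in this non-standard setting, where $U$ is only a semimartingale driven by the self-exciting intensity. We also need the integrands to be locally bounded or otherwise integrable, so that the stochastic integrals and brackets are well defined. We handle this by noting that on $[0,T]$ the process $U$ has a.s. finitely many jumps. Hence $[U]_t=\sum_{i\le N_t}Y_i^2$ and $\int K\,dU = \sum_{i\le N_t} K_{T_i}Y_i$ pathwise. The $U$-parts can then be treated as pathwise finite sums, and the result follows from the general definition $[Y,Z]=YZ-\int Y_-dZ-\int Z_-dY$ together with Protter's results for the Brownian part. Integrability of $\hat q$, $\hat w$ is implicitly assumed through the hypothesis that $(\hat p,\hat q,\hat w)$ solves \eqref{eq: BSDE}.
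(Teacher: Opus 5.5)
Your proposal is correct and follows essentially the same argument as the paper. Both split $\hat p$ and $X-\hat X$ into $dt$, $dB$ and $dU$ parts, discard every mixed bracket, and keep only the $dB$--$dB$ term (giving $\hat q_t(\sigma_t-\hat\sigma_t)\,dt$) and the $dU$--$dU$ term (giving $\hat w_{t-}(\gamma_{t-}-\hat\gamma_{t-})\,d[U]_t$). The paper gets there through the polarization identity and informal partition sums, whereas you use bilinearity and the rule $[\int H\,dY,\int K\,dZ]=\int HK\,d[Y,Z]$ directly, which is a cleaner justification. In particular, your reason for $[B,U]=0$ (continuity of $B$ together with finite variation of $U$) is the right one, rather than the martingale property of $B$ that the paper invokes.
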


\begin{proof}
From the polarization identity,

\begin{equation}
    \label{eq: polarization}
[\hat{p}, (X-\hat{X})]_t = \frac{1}{2}([\hat{p}+(X - \hat{X})]_t - [\hat{p}]_t - [X - \hat{X}]_t).
\end{equation}

To prove the lemma, we compute each term in equation \eqref{eq: polarization} separately: Note that, from the definition of the quadratic covariation

\[
[\hat{p}]_t = \lim_{|P| \rightarrow 0} \sum_{k=1}^n (\Delta \hat{p}_{t_k})^2 .
\]

From the definition of the adjoint BSDE, equation \eqref{eq: BSDE}, we have that the terms $(\Delta \hat{p}_{t_k})^2$ in the sum above consist of six types of sub-terms: $(\Delta t_k)^2$, $\Delta t_k \Delta B_{t_k}$, $\Delta t_k \Delta U_{t_k}$, $\Delta B_{t_k} \Delta U_{t_k}$, $(\Delta U_{t_k})^2$ and finally $(\Delta B_{t_k})^2$, all of which are multiplied with their respective coefficient functions given by the BSDE \eqref{eq: BSDE}. Apart from $(\Delta U_{t_k})^2$ and $(\Delta B_{t_k})^2$, all of these sub-terms disappear: Since $\Delta t_k$ goes to $0$ as the mesh grows finer and $U$ is a finite variation process, the cross terms $\Delta t_k \Delta U_{t_k}$ will disappear. Furthermore, the same holds for the cross terms $\Delta B_{t_k} \Delta U_{t_k}$ since $B$ is a martingale and $U$ has finite variation. Hence,

\begin{equation}
\label{eq: quad_var_0}
d[\hat{p}]_t = \hat{q}_t^2 dt + \hat{w}_{t-}^2 d[U]_t .
\end{equation}

By parallel arguments as above and the self-exciting state SDE \eqref{eq: SDE_SE}, 

\begin{equation}
\label{eq: quad_var1}
d[X-\hat{X}]_t = (\sigma_t - \hat{\sigma}_t)^2 dt + (\gamma_{t-} - \hat{\gamma}_{t-})^2 d[U]_t.
\end{equation}

Note also that, by the self-exciting state equation \eqref{eq: SDE_SE} and the adjoint BSDE \eqref{eq: BSDE} in combination, we find that:

\begin{equation}
\label{eq: quad_var2}
\begin{array}{llll}
d[\hat{p} + (X - \hat{X})]_t &=& (\hat{q}_t+(\sigma_t - \hat{\sigma}_t))^2 dt + (\hat{w}_{t-} + (\gamma_{t-} - \hat{\gamma}_{t-}))^2 d[U]_t \\[\medskipamount]
&=& \{ \hat{q}_t^2 + 2 \hat{q}_t (\sigma_t - \hat{\sigma}_t) + (\sigma_t - \hat{\sigma}_t)^2 \}dt \\[\medskipamount]
&&+ \{ \hat{w}_{t-}^2 + 2 \hat{w}_{t-} (\gamma_{t-} - \hat{\gamma}_{t-}) + (\gamma_{t-} - \hat{\gamma}_{t-})^2 \}d[U]_t.
\end{array}
\end{equation}

By inserting equations \eqref{eq: quad_var_0}, \eqref{eq: quad_var1} and \eqref{eq: quad_var2} into the expression in \eqref{eq: polarization}, we see that 

\begin{equation}
\begin{array}{llll}
d[\hat{p}, (X-\hat{X})]_t &=&  \frac{1}{2} \Big( \{ \hat{q}_t^2 + 2 \hat{q}_t (\sigma_t - \hat{\sigma}_t) + (\sigma_t - \hat{\sigma}_t)^2 \}dt \\[\medskipamount]
&&+ \{ \hat{w}_{t-}^2 + 2 \hat{w}_{t-} (\gamma_{t-} - \hat{\gamma}_{t-}) + (\gamma_{t-} - \hat{\gamma}_{t-})^2 \}d[U]_t - \hat{q}_t^2 dt  \\[\medskipamount]
&&- \hat{w}_{t-}^2 d[U]_t - (\sigma_t - \hat{\sigma}_t)^2 dt - (\gamma_{t-} - \hat{\gamma}_{t-})^2 d[U]_t\Big) \\[\medskipamount]

&=& \hat{q}_t (\sigma_t - \hat{\sigma}_t)dt + \hat{w}_t(\gamma_{t-} - \hat{\gamma}_{t-})d[U]_t.
\end{array}
\end{equation}

\end{proof}

In order to prove a representation of the self-exciting process via martingales, we need the following result by Brémaud \cite{Br81}. The result is included here for completeness.

\begin{theorem}(Brémaud \cite{Br81}, Theorem T8)
\label{thm: bremaud}
Assume that $N_t$ admits the intensity $\lambda_t$, and define $M_t:= N_t - \int_0^t \lambda_sds$. Then, if $X_t$ is $\mathcal{F}_t$-predictable such that $E[\int_0^t |X_s|\lambda_sds] < \infty$, then $\int_0^t X_sdM_s$ is an $\mathcal{F}_t$-martingale.
\end{theorem}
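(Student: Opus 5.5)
The statement is Brémaud's Theorem T8, quoted from \cite{Br81}; the paper cites it rather than proves it, but I would prove it in the standard way. First I would reduce to the case where $X$ is a simple predictable process. Then I would pass to general predictable integrands by a monotone class argument, with localization to control the integrability.

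\textbf{Step 1: a compensator identity for intervals.} Take $X_s = Z\,\boldsymbol{1}_{(a,b]}(s)$, where $Z$ is bounded and $\mathcal{F}_a$-measurable. Fix $s<t$ and $A\in\mathcal{F}_s$. I will show
\[
E\Big[\boldsymbol{1}_A\Big(\int_0^t X_u\,dM_u-\int_0^s X_u\,dM_u\Big)\Big]=0 .
\]
To do this, apply the defining identity \eqref{eq: intensity} to the integrand
\[
f(u)=\boldsymbol{1}_A Z\,\boldsymbol{1}_{(a,b]}(u)\,\boldsymbol{1}_{(s,t]}(u).
\]
This $f$ is predictable, being left-continuous in $u$ and adapted. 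Identity \eqref{eq: intensity} gives $E[\int f\,dN]=E[\int f\lambda\,du]$. Subtracting the two sides gives exactly the displayed equality. Integrability holds because $Z$ is bounded and $E[\int_0^t |X_u|\lambda_u du]<\infty$. By linearity the martingale property then holds for all simple predictable $X$.

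\textbf{Step 2: nonnegative predictable $X$.} Consider the class $\mathcal{C}$ of bounded predictable $X$ for which $E[\boldsymbol{1}_A\int_{(s,t]} X\,dN]=E[\boldsymbol{1}_A\int_s^t X\lambda\,du]$ for all $s<t$ and $A\in\mathcal{F}_s$. This class contains the simple processes, which form a $\pi$-system generating the predictable $\sigma$-field. It is closed under bounded monotone limits, by monotone convergence applied to both sides, since $N$ and $\lambda$ are nonnegative. By the monotone class theorem $\mathcal{C}$ contains all bounded predictable processes. Monotone convergence then extends the identity to all nonnegative predictable $X$, with both sides possibly infinite.

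\textbf{Step 3: general $X$ with $E[\int_0^t |X_s|\lambda_s ds]<\infty$.} Write $X=X^+-X^-$. Applying Step 2 to $|X|$ gives
\[
E\Big[\int_0^t|X|\,dN\Big]=E\Big[\int_0^t |X|\lambda\,ds\Big]<\infty .
\]
So both $\int X^\pm dN$ and $\int X^\pm\lambda\,ds$ are integrable, and the integral $\int_0^t X\,dM$ is well defined pathwise as a difference of Lebesgue--Stieltjes integrals. Subtracting the identities from Step 2 for $X^+$ and $X^-$ gives $E[\boldsymbol{1}_A(\int_0^t X\,dM-\int_0^s X\,dM)]=0$. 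Adaptedness of $\int_0^t X\,dM$ is clear. Together these show that $\int_0^t X_s\,dM_s$ is an $\mathcal{F}_t$-martingale.

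\textbf{Main obstacle.} The delicate point is Step 2. One must check that the simple predictable rectangles generate the predictable $\sigma$-field. One must also handle possibly infinite values carefully, using only nonnegativity so that the monotone convergence arguments remain valid without any prior integrability.
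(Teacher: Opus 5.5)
Your proof is correct. Its essential part, Step~3, is exactly the standard proof of Brémaud's T8 that the paper relies on; the paper itself gives no proof, only the citation. That proof applies \eqref{eq: intensity} to the nonnegative predictable integrands $\boldsymbol{1}_{(0,t]}|X|$ and $\boldsymbol{1}_A\boldsymbol{1}_{(s,t]}X^{\pm}$, uses the first to see that all four resulting expectations are finite, and subtracts.

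Where you differ is Steps~1--2, in which you rebuild the identity for general predictable integrands from elementary ones. That detour would be needed if the intensity were defined only through the martingale property of $N_t-\int_0^t\lambda_s\,ds$. In the paper, as in Brémaud, \eqref{eq: intensity} is postulated for all predictable $f$. This should be read as nonnegative $f$, which your use of $X^{\pm}$ respects. So Steps~1--2 can simply be deleted.

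That is just as well, because Step~2 as written does not go through. Nothing forces $E[N_t]=E[\int_0^t\lambda_s\,ds]$ to be finite, so for bounded signed $X$ both sides of your identity may be of the form $\infty-\infty$. Your class $\mathcal{C}$ is then not known to be a vector space, nor (for indicators) closed under proper differences, and the monotone class or $\pi$--$\lambda$ theorem needs one of these. Nonnegativity and monotone convergence alone do not repair this. You would have to localize along stopping times that keep $\int_0^{\cdot}\lambda_s\,ds$ bounded, and then remove the localization.
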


In proving the representation of the self-exciting process via martingales, we will define a process of the form $\bar{X}_s := \sum_{i=1}^{N_s} Y_i \boldsymbol{1}_{(T_{i-1},T_i]}(s)$. The following example shows a toy illustration of this process compared to the self-exciting process $U_t$.

\begin{example}(Toy illustration of the self-exciting process $U_t$ and the process $\bar{X}_t$)
\label{ex: toy_example}
Let 
\begin{equation}
\bar{X}_s := \sum_{i=1}^{N_s} Y_i \boldsymbol{1}_{(T_{i-1},T_i]}(s)    
\end{equation}
\noindent where $Y_i$ are the $\mathcal{F}_{T_{i-1}}$-measurable jump sizes and $0 \leq T_0 \leq \ldots \leq T_{N_s}$ are the jump times of $N_s$ (so $N_{T_j}- N_{T_{j-1}} = 1$ for all $j \geq 1$). 

In Figure \ref{fig: toy_example}, you see an illustration of one path of a (toy) self-exciting process $U_t$.

\begin{figure}[h!]
\resizebox{\textwidth}{!}{%
\begin{tikzpicture}[
    font=\small, 
    thick, 
]
    \coordinate (v1) at (0, 6);     
    \coordinate (v2) at (0, 5);     
    \coordinate (lvl_34) at (0, 3);  
    \coordinate (lvl_23) at (0, 2);  

    \coordinate (t0) at (1, 0);
    \coordinate (t1) at (2.5, 0); 
    \coordinate (t2) at (4, 0);   
    \coordinate (t3) at (8, 0);   
    \coordinate (t4) at (9.5, 0); 
    
    \coordinate (s) at (3.25, 0);  
    \coordinate (stilde) at (11.5, 0); 
    
    \draw[->] (-1, 0) -- (13, 0) node[below, xshift=3mm] {$t$};
    \draw[->] (0, -1) -- (0, 7) node[left] {}; 

    \foreach \tname/\n in {t0/0, t1/1, t2/2, t3/3, t4/4} {
        \draw (\tname) ++(0, 3pt) -- ++(0, -3pt) 
            node[below, yshift=-3pt] {$T_{\n}$};
    }
    
    \draw (s) ++(0, 3pt) -- ++(0, -3pt) node[below, yshift=-3pt] {$s$};
    \draw (stilde) ++(0, 3pt) -- ++(0, -3pt) node[below, yshift=-3pt] {$\tilde{s}$};
    
    \draw (3pt, 6) -- (-3pt, 6) node[left] {$v_1$};
    \draw (3pt, 5) -- (-3pt, 5) node[left] {$v_2$};

    \draw[dashed] (t0) -- (t0 |- v2);
    \draw[dashed] (t1) -- (t1 |- v1);
    \draw[dashed] (t2) -- (t2 |- lvl_23);
    \draw[dashed] (t3) -- (t3 |- lvl_34);
    
    \draw[dashed] (t4) -- (t4 |- v2);

    \draw[dashed] (v1) -- (t2 |- v1);
    \draw[dashed] (v2) -- (t1 |- v2);
    
    \draw[dashed] (s) -- (s |- v1);
    
    \draw[dashed] (stilde) -- (stilde |- v2);
    
    \draw[dashed] (stilde |- v2) -- (v2);

    \definecolor{funcblue}{HTML}{00529F}
    
    \draw[thick, funcblue] (t0 |- v2) -- (t1 |- v2);
    \draw[thick, funcblue] (t1 |- v1) -- (t2 |- v1);
    \draw[thick, funcblue] (t2 |- lvl_23) -- (t3 |- lvl_23);
    \draw[thick, funcblue] (t3 |- lvl_34) -- (t4 |- lvl_34);
    
    \draw[thick, funcblue] (t4 |- v2) -- (12.5, 5); 
    
    \fill[funcblue] (t0 |- v2) circle (1.5pt);
    \fill[funcblue] (t1 |- v1) circle (1.5pt);
    \fill[funcblue] (t2 |- lvl_23) circle (1.5pt);
    \fill[funcblue] (t3 |- lvl_34) circle (1.5pt);
    \fill[funcblue] (t4 |- v2) circle (1.5pt); 
    
    \draw[thick, funcblue, dotted] (t1 |- v2) -- (t1 |- v1);
    \draw[thick, funcblue, dotted] (t2 |- v1) -- (t2 |- lvl_23);
    \draw[thick, funcblue, dotted] (t3 |- lvl_23) -- (t3 |- lvl_34);
    \draw[thick, funcblue, dotted] (t4 |- lvl_34) -- (t4 |- v2);

    \draw [decorate, decoration={brace, amplitude=4pt, mirror}] 
        ([xshift=2mm]t0) -- ([xshift=2mm]t0 |- v2) 
        node[midway, left, xshift=-3mm] {$Y_1$};
        
    \draw [decorate, decoration={brace, amplitude=4pt, mirror}] 
        ([xshift=2mm]t1 |- v2) -- ([xshift=2mm]t1 |- v1) 
        node[midway, left, xshift=-3mm] {$Y_2$};

    \draw [decorate, decoration={brace, amplitude=4pt, mirror}] 
        ([xshift=2mm]t2 |- lvl_23) -- ([xshift=2mm]t2 |- v1) 
        node[midway, left, xshift=-3mm] {$Y_3$};

    \draw [decorate, decoration={brace, amplitude=4pt, mirror}] 
        ([xshift=2mm]t3 |- lvl_23) -- ([xshift=2mm]t3 |- lvl_34) 
        node[midway, left, xshift=-3mm] {$Y_4$};

    \draw [decorate, decoration={brace, amplitude=4pt, mirror}] 
        ([xshift=2mm]t4 |- lvl_34) -- ([xshift=2mm]t4 |- v2) 
        node[midway, left, xshift=-3mm] {$Y_5$};

    \node at (10.5, 5.7) {$U_t$};
\end{tikzpicture}
}
\caption{Illustration of a toy example of a self-exciting process $U_t$.}
\label{fig: toy_example}
\end{figure}

What is the corresponding process $\bar{X}_t$ in this case? Let's look at two points, $s$ and $\tilde{s}$ for illustration.

\begin{align*}
\bar{X}_s &= \sum_{i=1}^{N_s} Y_i \boldsymbol{1}_{(T_{i-1},T_i]}(s) = \sum_{i=1}^{2} Y_i \boldsymbol{1}_{(T_{i-1},T_i]}(s) \\
&=  Y_1 \boldsymbol{1}_{(T_{0},T_1]}(s) +  Y_2 \boldsymbol{1}_{(T_{1},T_2]}(s)=Y_2 = Y_{N_s}
\end{align*}

\noindent where the second equality follows because the number of jumps at time $s$ is $2$, and the final equality follows since $s$ is between jump times $T_1$ and $T_2$. The same kind of calculation can be done in the point $\tilde{s}$.

Clearly, $U_s=v_1$ and $U_{\tilde{s}}=v_2$. Hence, in particular, $\bar{X}_s \neq U_s$ (and $\bar{X}_{\tilde{s}} \neq U_{\tilde{s}}$), so the processes differ.

In fact, it holds in general, from the definition of $\bar{X}_s$, that 

\[
\bar{X}_s = Y_{N_s}.
\]

\end{example}

\bigskip

Now, we prove that the self-exciting process $U_t$ and its quadratic variation process $[U]_t$ can be expressed via martingales. This is crucial for the proof of the sufficient maximum principle, Theorem \ref{thm: suff_max_princ}. 

\begin{lemma}($U_t$ and $[U]_t$ can be expressed via martingales)
   \label{lemma: martingales}
Assume that \\$E\big[\int_0^t Y_i^2 \lambda_sds\big] < \infty$ for all $t<\infty$ and $Y_i \in \{Y_i\}_{i \in \mathbb{N}}$. Define
\[
\begin{array}{llll}
\bar{M}_t &:=& U_t -  \sum_{i=1}^{N_t} Y_i \int_{T_{i-1}}^{T_i} \lambda_s ds \\[\medskipamount]
\tilde{M}_t &:=& [U]_t - \sum_{i=1}^{N_t} Y_i^2 \int_{T_{i-1}}^{T_i} \lambda_s ds .
\end{array}
\]
Then, $\bar{M}_t$ and $\tilde{M}_t$ are $\mathcal{F}_t$-martingales.
\end{lemma}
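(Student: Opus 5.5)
The plan is to write both $U_t$ and $[U]_t$ as stochastic integrals against the counting process $N$, and then to apply Brémaud's result, Theorem \ref{thm: bremaud}, with the predictable integrands $\bar{X}_s$ and $\bar{X}_s^2$ from Example \ref{ex: toy_example}. We work under the convention used there that $Y_i$ is $\mathcal{F}_{T_{i-1}}$-measurable.

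\textbf{Step 1: predictability of the integrands.} On the stochastic interval $(T_{i-1},T_i]$ the process
\[
\bar{X}_s=\sum_{i\ge1}Y_i\boldsymbol{1}_{(T_{i-1},T_i]}(s)
\]
equals $Y_i$. Since $Y_i$ is $\mathcal{F}_{T_{i-1}}$-measurable, each term $Y_i\boldsymbol{1}_{(T_{i-1},T_i]}$ is left-continuous and adapted, hence predictable. So $\bar{X}$ and $\bar{X}^2$ are $\mathcal{F}_t$-predictable.

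\textbf{Step 2: pathwise identities.} The measure $dN_s$ charges only the jump times $T_i$, and $\bar{X}_{T_i}=Y_i$ because $T_i\in(T_{i-1},T_i]$. Hence
\[
\int_0^t\bar{X}_s\,dN_s=\sum_{i=1}^{N_t}Y_i=U_t .
\]
Since $U$ is a pure-jump finite-variation process, $[U]_t=\sum_{s\le t}(\Delta U_s)^2=\sum_{i=1}^{N_t}Y_i^2=\int_0^t\bar{X}_s^2\,dN_s$, where we assume simple jumps ($T_{i-1}<T_i$). On the compensator side,
\[
\int_0^t\bar{X}_s\lambda_s\,ds=\sum_i Y_i\int_{(T_{i-1},T_i]\cap[0,t]}\lambda_s\,ds .
\]
This agrees with $\sum_{i=1}^{N_t}Y_i\int_{T_{i-1}}^{T_i}\lambda_s\,ds$ up to the boundary piece coming from the current, not yet completed, interval $(T_{N_t},t]$. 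That piece must be handled by reading the sum in the statement with the last interval truncated at $t$, i.e.\ $\int_{T_{i-1}}^{T_i\wedge t}$, summing over all $i$ with $T_{i-1}<t$. I would make this interpretation explicit, since it is exactly what gives the compensator $\int_0^t\bar{X}_s\lambda_s\,ds$. The same applies with $Y_i^2$.

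\textbf{Step 3: martingale property.} With these identifications, $\bar{M}_t=\int_0^t\bar{X}_s\,dM_s$ and $\tilde{M}_t=\int_0^t\bar{X}_s^2\,dM_s$, where $M_t=N_t-\int_0^t\lambda_s\,ds$. Theorem \ref{thm: bremaud} then gives the martingale property once $E\int_0^t|\bar{X}_s|\lambda_s\,ds<\infty$ and $E\int_0^t\bar{X}_s^2\lambda_s\,ds<\infty$. The second bound is the moment hypothesis of the lemma, read as a bound on $E\int_0^t\bar{X}_s^2\lambda_s\,ds$. The first follows from $|\bar{X}|\le 1+\bar{X}^2$ together with $E\int_0^t\lambda_s\,ds=E[N_t]<\infty$. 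The latter holds by \eqref{eq: intensity} with $f=1_{[0,t]}$ and non-explosion; if needed I would simply add it as a standing assumption.

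\textbf{Main obstacle.} The real difficulty is the bookkeeping in Step 2. The compensator written in the statement must coincide with $\int_0^t\bar{X}_s\lambda_s\,ds$, which forces the truncation at $t$ of the last interval and requires $Y_i$ to be known at $T_{i-1}$. If $Y_i$ were only $\mathcal{F}_{T_i}$-measurable, with law $\nu(\lambda_{T_i-},\cdot)$, then $\bar{X}$ would fail to be predictable. In that case I would instead use the predictable mean $m(\lambda_{s-})=\int y\,\nu(\lambda_{s-},dy)$ and the fact that $U_t-\int_0^t m(\lambda_s)\lambda_s\,ds$ is a martingale. I would state clearly which convention is adopted, and follow the one in Example \ref{ex: toy_example}.
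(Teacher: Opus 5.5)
Your proposal follows the paper's route: represent $U$ and $[U]$ as integrals of a predictable process against $N$ and invoke Brémaud's T8. Your repairs are genuinely needed. The paper's $\bar X_s=\sum_{i=1}^{N_s}Y_i\boldsymbol{1}_{(T_{i-1},T_i]}(s)$ actually vanishes off the jump times, so your $\sum_{i\ge1}$ is the right predictable integrand. Even with that integrand, the paper's step $\int_0^t\bar X_s\lambda_s\,ds=\sum_{i=1}^{N_t}Y_i\int_{T_{i-1}}^{T_i}\lambda_s\,ds$ drops exactly the boundary term $Y_{N_t+1}\int_{T_{N_t}}^t\lambda_s\,ds$ you flagged. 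As a result, the statement read literally fails already for a Poisson process with intensity $\lambda$, $T_0=0$ and $Y_i\equiv1$: there $\bar M_t=N_t-\lambda T_{N_t}$ and $E[\bar M_t]=1-e^{-\lambda t}\neq0$.

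The one point to tighten is your claim that $E[N_t]<\infty$ follows from \eqref{eq: intensity} and non-explosion. Those give only $E[N_t]=E\int_0^t\lambda_s\,ds$ and $N_t<\infty$ a.s., so $E[N_t]<\infty$ must indeed be imposed as an assumption, as you offer.
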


\begin{proof}

The idea of the proof is to define a suitable process, $\bar{X}_s$, such that when we integrate this process w.r.t. the counting process $N_t$, we get the self-exciting process $U_t$. We also need to ensure that this process, $\bar{X}$, fits into the framework of Theorem \ref{thm: bremaud}. Then, by using the definition of the compensated martingale, $M_t:= N_t - \int_0^t \lambda_sds$, we can express $U_t$ via the martingale $\int_0^t \bar{X}_sdM_s$. In order to express the quadratic variation of the self-exciting process, $[U]_t$, via a martingale, we follow the same approach. However, the process we define, denoted by $\tilde{X}_s$, must integrate to the quadratic variation when we integrate w.r.t. the counting process.

\medskip

First we show that the self-exciting process $U_t$ can be expressed via a martingale. Let 
\[
\bar{X}_s := \sum_{i=1}^{N_s} Y_i \boldsymbol{1}_{(T_{i-1},T_i]}(s)
\]
\noindent where $Y_i$ are the $\mathcal{F}_{T_{i-1}}$-measurable jump sizes and $0 \leq T_0 \leq \ldots \leq T_{N_s}$ are the jump times of $N_s$ (so $N_{T_j}- N_{T_{j-1}} = 1$ for all $j \geq 1$). Note that

\[
\bar{X}_s = Y_{N_s},
\]
see Example \ref{ex: toy_example} and the comments therein. Then, $\bar{X}$ is $\mathcal{F}_t$-predictable because it is adapted and left-continuous and 

\begin{equation}
\label{eq: mellom}    
\begin{array}{lllll}
\int_0^t \bar{X}_s dN_s &=& \sum_{s \leq t} \bar{X}_{s-} \Delta N_s \\[\medskipamount]

&=& \sum_{i=1}^{N_t} \bar{X}_{T_i} \Delta N_{T_i} \\[\medskipamount]

&=& \sum_{i=1}^{N_t} Y_i \\[\medskipamount]

&=& U_t.

\end{array}
\end{equation}

\noindent Thus, by Theorem \ref{thm: bremaud},

\begin{equation}
    \begin{array}{llll}
        \int_0^t \bar{X}_{s-}dM_s &=& \int_0^t \bar{X}_{s-} \big( dN_s - \lambda_s ds \big) \\[\medskipamount]
        
        &=& \int_0^t \bar{X}_{s-} dN_s - \int_0^t \bar{X}_s \lambda_s ds \\[\medskipamount]
        
        &=& U_t - \int_0^t \big( \sum_{i=1}^{N_s} Y_i \boldsymbol{1}_{(T_{i-1}, T_i]}(s) \big) \lambda_s ds \\[\medskipamount]

        &=& U_t - \sum_{i=1}^{N_t} Y_i \int_{T_{i-1}}^{T_i} \lambda_s ds \\[\medskipamount]

        &=& \bar{M}_t.
    \end{array}
\end{equation}
\noindent where the third equality follows from equation \eqref{eq: mellom}. Because of the boundedness assumption, it follows from Theorem \ref{thm: bremaud},  that $\bar{M}_t$ is a martingale because $\int_0^t \bar{X}_sdM_s$ is a martingale.

\medskip

Then, we show that the quadratic covariation of the self-exciting process, $[U]_t$, can be expressed via a (different) martingale. To do so, define

\begin{equation}
    \label{eq: X_tilde}
    \tilde{X}_t := \sum_{i=1}^{N_t} Y_i^2 \boldsymbol{1}_{(T_{i-1}, T_i]}(t)
\end{equation}
\noindent where $Y_i$ is defined as before. Then, $\tilde{X}$ is $\mathcal{F}_t$-predictable, because it is adapted and left continuous. Also,

\begin{equation}
\label{eq: int_X_tilde}
    \begin{array}{llll}
        \int_0^T \tilde{X}_{s-} dN_s &=& \sum_{s \leq t} \tilde{X}_{s-} \Delta N_s \\[\medskipamount]

        &=& \sum_{i=1}^{N_t} \tilde{X}_{T_i} \Delta N_{T_i} \\[\medskipamount]

        &=& \sum_{i=1}^{N_t} Y_i^2 \\[\smallskipamount]

        &=& [U]_t
    \end{array}
\end{equation}
\noindent where the final equality holds because the jump process is constant between the jumps 

Hence,

\begin{equation}
    \begin{array}{llll}
        \int_0^t \tilde{X}_{s-}dM_s &=& \int_0^t \tilde{X}_{s-} \big( dN_s - \lambda_s ds \big) \\[\medskipamount]

        &=& \int_0^t \tilde{X}_{s-} dN_s - \int_0^t \tilde{X}_s \lambda_s ds \\[\medskipamount]

        &=& [U]_t - \int_0^t \big( \sum_{i=1}^{N_s} Y_i^2 \boldsymbol{1}_{(T_{i-1}, T_i]}(s) \big) \lambda_s ds \\[\medskipamount]

        &=& [U]_t - \sum_{i=1}^{N_t} Y_i^2 \int_{T_{i-1}}^{T_i} \lambda_s ds \\[\medskipamount]

        &=& \tilde{M}_t
    \end{array}
\end{equation}
\noindent where the third equality follows from equation \eqref{eq: int_X_tilde}. Note that because of the boundedness assumption in the lemma,  $\int_0^t \tilde{X}_sdM_s$ is a martingale by Theorem \ref{thm: bremaud}. Hence, $\tilde{M}$ is a martingale as well, from the calculation above.
\end{proof}

Note that Lemma \ref{lemma: martingales} also implies that the self-exicing BSDE \eqref{eq: BSDE} can be rewritten via martingales:

\[
    \begin{array}{llll}
        dp_t &=& -\frac{\partial \mathcal{H}}{\partial x} dt + q_t dB_t + w_{t-} d U_t \\[\medskipamount]
        &=& -\frac{\partial \mathcal{H}}{\partial x}  dt + q_t dB_t + w_{t-} d\big( \bar{M}_t + \sum_{i=1}^{N_t} Y_i \int_{T_{i-1}}^{T_i} \lambda_s ds \big) 
    \end{array}
\]
\noindent with the terminal condition $p_T=g'(X_T)$. Recall that the expression for $\frac{\partial \mathcal{H}}{\partial x}$ is given in equation \eqref{eq:dHdx}.  

We remark that after the rewriting via martingales, we still have a linear BSDE. As previously mentioned, existence and uniqueness results for such equations have been studied by Papapantoleon et al. \cite{papapantoleon2018existence} and Elmansouri and El Otmani \cite{elmansouri2023generalized}.

\subsection{Proof of the sufficient maximum principle}
\label{sec: sufficient_proof}

With Lemma \ref{lemma: quadratic_variation} and Lemma \ref{lemma: martingales} in place, we are ready to prove the sufficient maximum principle, Theorem \ref{thm: suff_max_princ}:

\medskip

\begin{proof}(\emph{Proof of Theorem \ref{thm: suff_max_princ}: The sufficient maximum principle})\\[\smallskipamount]
\noindent Fix an admissible control $\hat{\pi} \in \mathcal{A}$ with corresponding solutions $\hat{X}_t, \hat{p}_t, \hat{q}_t, \hat{w}_t$ to the self-exciting state SDE \eqref{eq: SDE_SE} and the corresponding adjoint BSDE \eqref{eq: BSDE}. We will use the following short-hand notation:
$$
\hat{b}_t := b(t, \hat{X}_t, \hat{\pi}_t), \mbox{ }
\hat{\sigma}_t := \sigma(t, \hat{X}_t, \hat{\pi}_t), \mbox{ }
\hat{\gamma}_{t} := \gamma(t, \hat{X}_{t^{-}}, \hat{\pi}_{t-}), \mbox{ }
\hat{h}_t := h(t, \hat{X}_t, \hat{\pi}_t),
$$
$$
\mathcal{H}_t:= \mathcal{H}(t, X_{t}, \pi_{t}, \hat{p}_t, \hat{q}_t, \hat{w}_t), \mbox{ } \hat{\mathcal{H}}_t :=\mathcal{H}(t, \hat{X}_{t}, \hat{\pi}_{t}, \hat{p}_t, \hat{q}_t, \hat{w}_t),
$$
$$
\frac{\partial \hat{b}_t}{\partial x} := \frac{\partial b}{\partial x}(t, \hat{X}_t, \hat{\pi}_t), \mbox{ }
\frac{\partial \hat{\sigma}_t}{\partial x} := \frac{\partial \sigma}{\partial x}(t, \hat{X}_t, \hat{\pi}_t), \mbox{ }
\frac{\partial \hat{\gamma}_{t}}{\partial x} :=\frac{\partial \gamma}{\partial x} (t, \hat{X}_{t^{-}}, \hat{\pi}_{t-}),
$$

$$
\frac{\partial \hat{h}_t}{\partial x} := \frac{\partial h}{\partial x}(t, \hat{X}_t, \hat{\pi}_t), 
\frac{\partial\hat{\mathcal{H}}_t}{\partial x}:=\frac{\partial\hat{\mathcal{H}}}{\partial x} (t, \hat{X}_{t}, \hat{\pi}_{t}, \hat{p}_t, \hat{q}_t, \hat{w}_t),
$$
\medskip
\noindent and we introduce the same notation for their partial derivatives w.r.t. $u$.
We consider the difference $J(\pi) - J(\hat{\pi})$. The idea of the proof is to show that this difference is non-positive for all $\pi \in \mathcal{A}$ so that $\hat{\pi}$ is the optimal control. 

Note that from the definition of the performance function, $J(\pi) - J(\hat{\pi})= A_1 + A_2$ where

\[
\begin{array}{llll}
A_1 := E[\int_0^T \{h_t-\hat{h}_t\} dt ] \\[\medskipamount]
A_2 :=  E[g(X_T) - g(\hat{X}_T)]
\end{array}
\]

\noindent Then, from the definition of the Hamiltonian (see equation \eqref{eqref: Hamiltonian}) and the assumed concavity of $(x, \pi) \mapsto \mathcal{H}(t,x,\pi,\hat{p},\hat{q}, \hat{w})$.

\begin{equation}
\label{eq: I}
\begin{array}{llll}
A_1 &=& E[\int_0^T \{\mathcal{H}_t - \hat{\mathcal{H}}_t - \big(  b_t - \hat{b}_t + \sum_{i=1}^{N_t} Y_i \boldsymbol{1}_{(T_{i-1},T_i]}(t)\lambda_t(\gamma_t-\hat{\gamma}_t) \big)\hat{p}_t \\[\smallskipamount]
&&- (\sigma_t - \hat{\sigma}_t)\hat{q}_t 
- \lambda_t \big( \sum_{i=1}^{N_t} Y_i^2 \boldsymbol{1}_{(T_{i-1},T_i]}(t)(\gamma_t - \hat{\gamma}_t) \\[\medskipamount]
&&+ \sum_{i=1}^{N_t} Y_i \boldsymbol{1}_{(T_{i-1},T_i]}(t)(X_t - \hat{X}_t) \big)\hat{w}_t \} dt] \\[\medskipamount]
&&\leq E[\int_0^T \{ \frac{\partial \hat{\mathcal{H}}_t}{\partial x} (X_t - \hat{X}_t) + \frac{\partial \hat{\mathcal{H}}_t}{\partial \pi}(\pi_t - \hat{\pi}_t) - \big( b_t-\hat{b}_t \\[\medskipamount]
&&+ \sum_{i=1}^{N_t} Y_i \boldsymbol{1}_{(T_{i-1},T_i]}(t)\lambda_t(\gamma_t-\hat{\gamma}_t) \big)\hat{p}_t - (\sigma_t - \hat{\sigma}_t)\hat{q}_t \\[\medskipamount]
&&- \lambda_t \big(   \sum_{i=1}^{N_t} Y_i^2 \boldsymbol{1}_{(T_{i-1},T_i]}(t)(\gamma_t - \hat{\gamma}_t) \\[\medskipamount]
&&+ \sum_{i=1}^{N_t} Y_i \boldsymbol{1}_{(T_{i-1},T_i]}(t)(X_t - \hat{X}_t) \big) \hat{w}_t  \} dt]

\end{array}
\end{equation}

\noindent Furthermore, from the concavity of $g$ and the terminal conditon of the adjoint BSDE \eqref{eq: BSDE},

\begin{equation}
\label{eq: II}
\begin{array}{llll}
     A_2 &=& E[g(X_T)- g(\hat{X}_T)] \\[\smallskipamount]
     &\leq& E[g'(X_T)(X_T - \hat{X}_T)] \\[\smallskipamount]
     &=& E[\hat{p}_T (X_T - \hat{X}_T)]
\end{array}
\end{equation}

\noindent Note that, from integration by parts (see Protter \cite{Protter}, Corollary 2, page 68, for the general semi-martingale case)

\begin{equation}
\label{eq: int_by_parts}
\begin{array}{llll}
    d(\hat{p}_t (X_t - \hat{X}_t)) &=& (X_{t-}-\hat{X}_{t-})d\hat{p}_t + \hat{p}_{t-}d(X_t - \hat{X}_t) + d[\hat{p},X-\hat{X}]_t  \\
\end{array}
\end{equation}
\noindent where $[\hat{p},X-\hat{X}]_t$ is the quadratic covariation of the processes $\hat{p}$ and $X-\hat{X}$ (see Protter \cite{Protter}). Furtermore, from Lemma \ref{lemma: quadratic_variation}:
\begin{equation}
\label{eq: covariation}
d[\hat{p},X-\hat{X}]_t = \hat{q}_t (\sigma_t - \hat{\sigma}_t)dt + \hat{w}_{t-}(\gamma_{t-} - \hat{\gamma}_{t-})d[U]_{t}
\end{equation}

\noindent where $[U]_t$ is the quadratic variation of the process $U$.

\noindent From equation \eqref{eq: II},

\begin{equation}
\label{eq: delta}
\begin{array}{llll}
    A_2 &\leq&  E\big[\hat{p}_T (X_T - \hat{X}_T)] \\[\medskipamount]
     & =& E[\int_0^T (X_{t} - \hat{X}_{t})\big( - \frac{\partial \hat{\mathcal{H}}_t}{\partial x}dt + \hat{q}_t dB_t + \hat{w}_{t-} dU_t \big)
     + \int_0^T \hat{p}_{t} \big( (b_t-\hat{b}_t)dt \\[\medskipamount]
     &&+ (\sigma_t - \hat{\sigma}_t)dB_t + (\gamma_{t-} - \hat{\gamma}_{t-})dU_t \big) \\[\smallskipamount]
     &&+\int_0^T \hat{q}_t (\sigma_t - \hat{\sigma}_t)dt + \int_0^T \hat{w}_{t-}(\gamma_{t-} - \hat{\gamma}_{t-})d[U]_t
     \big]
\end{array}
\end{equation}

\smallskip
\noindent where the equality above follows from the integration-by-parts formula (see equation \eqref{eq: int_by_parts}) as well as the definition of the adjoint BSDE \eqref{eq: BSDE}, the self-exciting state SDE \eqref{eq: SDE_SE} and Lemma \ref{lemma: quadratic_variation}. 

Hence, by combining equations \eqref{eq: I} and \eqref{eq: delta}, and using some algebra as well as the fact that the Brownian motion is a martingale,

\begin{equation}
\label{eq: tilde}
\begin{array}{llll}
    A_1 + A_2  &\leq& E[\int_0^T \big\{ \frac{\partial \hat{\mathcal{H}}_t}{\partial x}(X_t - \hat{X}_t) +  \frac{\partial \hat{\mathcal{H}}_t}{\partial \pi}(\pi_t - \hat{\pi}_t) - \big(b_t-\hat{b}_t + \\[\medskipamount]
    &&\sum_{i=1}^{N_t} Y_i \boldsymbol{1}_{(T_{i-1},T_i]}(t)\lambda_t(\gamma_t - \hat{\gamma}_t)\big)\hat{p}_t -(\sigma_t - \hat{\sigma}_t)\hat{q}_t \\[\medskipamount]
    &&- \lambda_t (\sum_{i=1}^{N_t} Y_i^2 \boldsymbol{1}_{(T_{i-1},T_i]}(t)(\gamma_t - \hat{\gamma}_t) \\[\medskipamount]
    &&+ \sum_{i=1}^{N_t} Y_i \boldsymbol{1}_{(T_{i-1},T_i]}(t)(X_t - \hat{X}_t))\hat{w}_t \big\}dt] \\[\medskipamount]
    && + E[\int_0^T \{ (-\frac{\partial \hat{\mathcal{H}}_t}{\partial x})(X_{t} - \hat{X}_{t}) + \hat{p}_{t}(b_t-\hat{b}_t) + \hat{q}_t (\sigma_t - \hat{\sigma}_t)\}dt \\[\medskipamount]
    &&+ \int_0^T \{(X_{t} - \hat{X}_{t}) \hat{q}_t +  (\sigma_t - \hat{\sigma}_t)\hat{p}_{t}\} dB_t
    \\[\medskipamount]
    &&+ \int_0^T \{ (X_{t-} - \hat{X}_{t-}) \hat{w}_{t-} + (\gamma_{t-} - \hat{\gamma}_{t-})\hat{p}_{t-} \}dU_t \\[\medskipamount]
    &&+ \int_0^T \hat{w}_{t-}(\gamma_{t-} - \hat{\gamma}_{t-})d[U]_t] \\[\medskipamount]
    &=& E[\int_0^T  \frac{\partial \hat{\mathcal{H}}_t}{\partial \pi}(\pi_t - \hat{\pi}_t) dt]
    - E\big[\int_0^T \{ \sum_{i=1}^{N_t} Y_i \boldsymbol{1}_{(T_{i-1},T_i]}(t) \lambda_t(\gamma_t - \hat{\gamma}_t)\hat{p}_t \\[\medskipamount]
    &&+ \lambda_t \big( \sum_{i=1}^{N_t} Y_i^2 \boldsymbol{1}_{(T_{i-1},T_i]}(t)(\gamma_t - \hat{\gamma}_t) \\[\medskipamount]
    &&+ \sum_{i=1}^{N_t} Y_i \boldsymbol{1}_{(T_{i-1},T_i]}(t)(X_t-\hat{X}_t) \big)\hat{w}_t \}dt \\[\medskipamount]
    &&- \int_0^T \{ (X_{t-} - \hat{X}_{t-})\hat{w}_{t-} + (\gamma_{t-} - \hat{\gamma}_{t-})\hat{p}_{t-} \}dU_t \\[\smallskipamount]
    &&- \int_0^T (\gamma_{t-} - \hat{\gamma}_{t-})\hat{w}_{t-} d[U]_t \big]
    
\end{array}
\end{equation}

\noindent From Lemma \ref{lemma: martingales} it follows that

\[
\begin{array}{llll}
    \bar{M}_t &:=& U_t - \sum_{i=1}^{N_t} Y_i \int_0^t \lambda_s \boldsymbol{1}_{(T_{i-1},T_i]}(s) ds \mbox{ and }  \\
    \tilde{M}_t &:=& [U]_t - \sum_{i=1}^{N_t} Y_i^2 \int_0^t \lambda_s \boldsymbol{1}_{(T_{i-1},T_i]}(s)ds 
\end{array}
\]

\noindent are martingales. Hence, 

\begin{equation}
\label{eq: U's}
\begin{array}{llll}
    U_t &=& \bar{M}_t + \sum_{i=1}^{N_t} Y_i \int_0^t \lambda_s \boldsymbol{1}_{(T_{i-1},T_i]}(s) ds \mbox{ and }  \\[\medskipamount]
    [U]_t &=& \tilde{M}_t + \sum_{i=1}^{N_t} Y_i^2 \int_0^t \lambda_s \boldsymbol{1}_{(T_{i-1},T_i]}(s) ds 
\end{array}
\end{equation}

By inserting the expressions in \eqref{eq: U's} into the expression in \eqref{eq: tilde}, and using that $M$ and $\tilde{M}$ are martingales (so their integrals have expectation 0), we get that the expression in \eqref{eq: tilde} becomes

\[
\begin{array}{llll}
E[\int_0^T  \frac{\partial \hat{\mathcal{H}}_t}{\partial \pi}(\pi_t - \hat{\pi}_t) dt] \\[\medskipamount]
- E\big[ \int_0^T \{ \sum_{i=1}^{N_t} Y_i \boldsymbol{1}_{(T_{i-1},T_i]}(t) \lambda_t (\gamma_t - \hat{\gamma}_t)\hat{p}_t + \lambda_t \big( \sum_{i=1}^{N_t} Y_i^2 \boldsymbol{1}_{(T_{i-1},T_i]}(t) (\gamma_t - \hat{\gamma}_t) \\[\medskipamount]
+ \sum_{i=1}^{N_t} Y_i \boldsymbol{1}_{(T_{i-1},T_i]}(t)(X_t - \hat{X}_t) \big)\hat{w}_t \} dt \big] \\[\medskipamount]
+ E \big[ \int_0^T \{ (X_t - \hat{X}_t)\hat{w}_t + (\gamma_t - \hat{\gamma}_t)\hat{p}_t \} \sum_{i=1}^{N_t} Y_i \boldsymbol{1}_{(T_{i-1},T_i]}(t)\lambda_t dt \big]
\\[\medskipamount]
+ E[\int_0^T \{ (X_{t-} - \hat{X}_{t-})\hat{w}_{t-} + (\gamma_{t-} - \hat{\gamma}_{t-})\hat{p}_{t-} \} dM_t] \\[\medskipamount]
+ E\big[ \int_0^T (\gamma_t - \hat{\gamma}_t) \hat{w}_t \sum_{i=1}^{N_t} Y_i^2 \boldsymbol{1}_{(T_{i-1},T_i]}(t) \lambda_t dt \big]
+E[\int_0^T (\gamma_{t-} - \hat{\gamma}_{t-}) \hat{w}_{t-} d\tilde{M}_t]\\[\bigskipamount]
= E[\int_0^T  \frac{\partial \hat{\mathcal{H}}_t}{\partial \pi}(\pi_t - \hat{\pi}_t) dt] \leq 0.
\end{array}
\]
\noindent Where the final inequality follows because $\hat{\pi}$ maximizes $\hat{\mathcal{H}}$ by assumption.

\noindent Since this holds for any $\pi \in \mathcal{A}$, we have found

\[
J(\pi) \leq J(\hat{\pi}) \quad \forall \quad \pi \in \mathcal{A}.
\]

\noindent Hence, $\hat{\pi}$ is an optimal control for our problem.

\end{proof}

\section{An equivalence/necessary maximum principle}
\label{sec: necessary}

In this section, we prove an equivalence principle, sometimes called a necessary maximimum principle, for the self-exciting optimal control problem \eqref{eq: OptControl}. In particular, this is useful because the concavity condition in the sufficient maximum principle, Theorem \ref{thm: suff_max_princ}, may not hold in applications. 

In order to prove the equivalence maximum principle, we need some additional assumptions. In the remaining part of this section, we assume the following:
\begin{assumption}
We make the following additional assumptions for the equivalence maximum principle:
 \begin{enumerate}
    \item For all $\pi \in \mathcal{A}$ and all $\beta \in \mathcal{A}$ that are bounded, there exists a $\delta > 0$ such that
    \[
\pi + y \beta \in \mathcal{A} \quad \forall \quad y \in [0,\delta].
    \]

    \item Also, $\beta_t := \boldsymbol{1}_{[s,T]}(t) \kappa \in \mathcal{A}$, where $\kappa$ is some bounded and measurable random variable. 
    \item  We let  $X_t^{\pi+y\beta}$ and $X_t^{\pi}$ denote the solutions of the self-exciting state SDE \eqref{eq: SDE_SE} corresponding to $\pi+y\beta$ and $\pi$ respectively. Then, we assume that for all $\pi, \beta \in \mathcal{A}$, the following derivative process exists and are in $\mathcal{L}^2([0,T \times \Omega])$:
    \begin{equation}
    \label{eq: derivative_process}
        x_t := \lim_{y \rightarrow 0^+} \frac{X_t^{\pi+y\beta} - X_t^{\pi}}{y} 
    \end{equation}
\end{enumerate}   
\end{assumption}
\medskip

\noindent Again, we introduce some shorthand to ease notation.

\begin{notation}
We use the following shorthand  notation:\newline
$$
\frac{\partial b_t}{\partial x} :=\frac{\partial b}{\partial x}(t, X^{\pi+y\beta}_t, {\pi}_t+y\beta_t), \quad
\frac{\partial \sigma_t}{\partial x} :=\frac{\partial \sigma}{\partial x}(t, X^{\pi+y\beta}_t, {\pi}_t+y\beta_t),
$$
$$
\frac{\partial \gamma_t}{\partial x} :=\frac{\partial \gamma}{\partial x}(t, X^{\pi+y\beta}_{t^{-}}, {\pi}_{t-}+y\beta_t), \quad
\frac{\partial h_t}{\partial x} :=\frac{\partial h}{\partial x}(t, X^{\pi+y\beta}_t, {\pi}_t+y\beta_t),
$$
$$
\frac{\partial \mathcal{H}_t}{\partial x} :=\frac{\partial  \mathcal{H}}{\partial x}(t, X^{\pi+y\beta}_t, {\pi}_t+y\beta_t,p_t,q_t,w_t),
$$
Furthermore, we use corresponding notation as the one above for the partial derivatives w.r.t. $\pi$. For notational convenience, we also define the shorthand

$$
\frac{\partial }{\partial y} X_t^{\pi+y \beta} |_{y=0} := \lim_{y \rightarrow 0^+} \frac{X_t^{\pi+y\beta} - X_t^{\pi}}{y}.
$$
In the remaining part of this section, we will use this notation for various processes depending on the control $\pi$, not just $X_t^{\pi}$.
\end{notation}

From the self-exciting SDE \eqref{eq: SDE_SE}, we derive that the derivative process $x_t$, defined in equation \eqref{eq: derivative_process}, is given by the SDE:
\begin{equation}
\label{eq: derivative_rewritten}
\begin{array}{lll}
dx_t &=& \Big[\frac{\partial b_t}{\partial x}x_t + \frac{\partial b_t}{\partial \pi}\beta_t \Big] dt+ \Big[\frac{\partial \sigma_t}{\partial x} x_t + \frac{\partial \sigma_t}{\partial \pi} \beta_t\Big] dB_t + \Big[\frac{\partial \gamma_{t-}}{\partial x} x_{t^{-} }+ \frac{\partial \gamma_{t-}}{\partial \pi} \beta_{t-}\Big] dU_t \\[\bigskipamount]
x_0 &=& 0.
\end{array}
\end{equation}

\noindent Note that this is a linear self-exciting SDE. By Assumption \ref{assumption: existence_uniqueness} the coefficient functions of the SDE \eqref{eq: derivative_rewritten} are locally Lipschitz. Hence, by Theorem 16.7.6. in Cohen and Elliott \cite{CohenElliott} the SDE \eqref{eq: derivative_rewritten} admits a unique c\`adl\`ag solution. 


As it turns out, it is equivalent for a control to be a critical point for the performance functional $J$ and a critical point of the Hamiltonian. This is the result given in the following equivalence maximum principle:

\begin{theorem}(The equivalence maximum principle)
\label{eq: necessary}
The following two statements are equivalent:

\begin{itemize}
    \item[$(i)$] $$
    \frac{\partial J(\pi + y\beta)}{\partial y} |_{y=0} =0,
    $$
    \item[$(ii)$]
    $$
    \frac{\partial \mathcal{H}_t}{\partial \pi} =0.
    $$
\end{itemize}
\end{theorem}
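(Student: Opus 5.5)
The plan is the standard Framstad–Øksendal–Sulem route: derive an explicit formula for the directional derivative of $J$ in terms of the Hamiltonian, then localise it in time and $\omega$. The key identity to prove is
\begin{equation}
\label{eq: key_identity}
\frac{\partial J(\pi + y\beta)}{\partial y}\Big|_{y=0} = E\Big[\int_0^T \frac{\partial \mathcal{H}_t}{\partial \pi}\,\beta_t\,dt\Big],
\end{equation}
with $\mathcal{H}$ evaluated at $(t,X^\pi_t,\pi_t,p_t,q_t,w_t)$, where $(p,q,w)$ solves the adjoint BSDE \eqref{eq: BSDE} for the control $\pi$. Both implications will then follow from \eqref{eq: key_identity}.

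\textbf{Step 1: differentiate $J$.} Differentiating under the expectation (justified by the $C^1$ assumptions on $h,g$, the $L^2$ existence of the derivative process $x_t$ in \eqref{eq: derivative_process}, and a dominated convergence argument, which we take as implicit in the assumptions) gives
\[
\frac{\partial J(\pi+y\beta)}{\partial y}\Big|_{y=0} = E\Big[\int_0^T \Big(\frac{\partial h_t}{\partial x}x_t + \frac{\partial h_t}{\partial \pi}\beta_t\Big)dt + g'(X_T)x_T\Big].
\]

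\textbf{Step 2: rewrite the terminal term.} Write $g'(X_T)x_T = p_Tx_T$ and expand $d(p_tx_t)$ by the product rule, exactly as in \eqref{eq: int_by_parts}, using \eqref{eq: BSDE} and the linear SDE \eqref{eq: derivative_rewritten}. The covariation is computed as in Lemma \ref{lemma: quadratic_variation}, with $\sigma-\hat\sigma$ and $\gamma-\hat\gamma$ replaced by the linearised coefficients:
\[
d[p,x]_t = q_t\Big(\tfrac{\partial\sigma_t}{\partial x}x_t+\tfrac{\partial\sigma_t}{\partial\pi}\beta_t\Big)dt + w_{t-}\Big(\tfrac{\partial\gamma_{t-}}{\partial x}x_{t-}+\tfrac{\partial\gamma_{t-}}{\partial\pi}\beta_{t-}\Big)d[U]_t .
\]
The $dB$ integrals have zero expectation. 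For the $dU$ and $d[U]$ integrals, Lemma \ref{lemma: martingales} replaces $dU_t$ by $d\bar M_t+\bar X_t\lambda_t\,dt$ and $d[U]_t$ by $d\tilde M_t+\tilde X_t\lambda_t\,dt$, and the martingale parts drop out in expectation, exactly as in the proof of Theorem \ref{thm: suff_max_princ}. Substituting \eqref{eq:dHdx} for $\partial\mathcal{H}/\partial x$, the $x_t$-terms should cancel against $\frac{\partial h_t}{\partial x}x_t$ and the $p,q,w$ terms of $\partial \mathcal H/\partial x$. What remains is $\beta_t$ times $\frac{\partial h}{\partial\pi}+\big(\frac{\partial b}{\partial\pi}+\bar X_t\lambda_t\frac{\partial\gamma}{\partial\pi}\big)p_t+\frac{\partial\sigma}{\partial\pi}q_t+\lambda_t\tilde X_t\frac{\partial\gamma}{\partial\pi}w_t$, which is precisely $\frac{\partial\mathcal{H}_t}{\partial\pi}\beta_t$. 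This proves \eqref{eq: key_identity}.

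\textbf{Step 3: the two directions.} The implication $(ii)\Rightarrow(i)$ is immediate from \eqref{eq: key_identity}. For $(i)\Rightarrow(ii)$, take $\beta_t=\boldsymbol{1}_{[s,T]}(t)\kappa$ with $\kappa$ bounded and $\mathcal F_s$-measurable; this $\beta$ is admissible by the additional assumptions. Then $E\big[\kappa\int_s^T\frac{\partial\mathcal{H}_t}{\partial\pi}dt\big]=0$ for all such $\kappa$, so $E\big[\int_s^T\frac{\partial\mathcal{H}_t}{\partial\pi}dt\mid\mathcal F_s\big]=0$ for every $s$. Differentiating in $s$, or equivalently using the family $\beta_t=\boldsymbol 1_{(s,s+h]}(t)\kappa$ and letting $h\downarrow0$ with right-continuity, gives $E\big[\frac{\partial\mathcal{H}_s}{\partial\pi}\mid\mathcal F_s\big]=0$. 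Since $\frac{\partial \mathcal H_s}{\partial\pi}$ is adapted, this yields $(ii)$ for a.e. $s$.

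\textbf{Main obstacle.} The delicate step is the bookkeeping with the $dU$ and $d[U]$ terms in Step 2. Here one has to check that the integrands $x_{t-}w_{t-}$, $p_{t-}\frac{\partial\gamma}{\partial\pi}\beta_{t-}$, and so on are predictable and satisfy the integrability condition of Theorem \ref{thm: bremaud}. One also has to confirm that the compensators $\bar X\lambda$ and $\tilde X\lambda$ match exactly the $\lambda$-terms built into $\mathcal{H}$. The justification for exchanging the limit $y\to0^+$ with the expectation is the other place where integrability must be assumed.
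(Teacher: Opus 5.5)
Your proposal is correct and follows essentially the same route as the paper. You split the derivative of $J$ into its $h$- and $g$-parts, rewrite $E[p_T x_T]$ with the semimartingale product rule (covariation computed as in Lemma~\ref{lemma: quadratic_variation}), compensate the $dU$ and $d[U]$ integrals via Lemma~\ref{lemma: martingales}, cancel the $x_t$-terms against $\partial\mathcal{H}_t/\partial x$ to get $E[\int_0^T \beta_t\,\partial\mathcal{H}_t/\partial\pi\,dt]$, and localise with $\beta_t=\boldsymbol{1}_{[s,T]}(t)\kappa$. Your treatment of $(i)\Rightarrow(ii)$, with $\kappa$ taken $\mathcal{F}_s$-measurable and the argument passed through the conditional expectation and adaptedness, is slightly more careful than the paper's, which simply asserts the conclusion from $E[\kappa\int_s^T \partial\mathcal{H}_t/\partial\pi\,dt]=0$.
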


\begin{proof}
First, we note that from the definition of the performance functional, see equation \eqref{eq: OptControl}, it follows that the expression in item $(i)$ can be written 

$$
\frac{\partial J(\pi + y\beta)}{\partial y} |_{y=0} = \frac{\partial }{\partial y} E[\int_0^T h(t,X_t^{\pi + y\beta}, \pi_t+y\beta_t)dt + g(X_T^{\pi + y\beta})] |_{y=0}.
$$

Define the following,

$$
I_1 := \frac{\partial}{\partial y} E[\int_0^T h(t,X_t^{\pi + y\beta}, \pi_t+y\beta)dt] |_{y=0}
$$
\noindent and
$$
I_2 := \frac{\partial}{\partial y} E[g(X_T^{\pi + y\beta})] |_{y=0}.
$$

By Assumption \ref{assumption: existence_uniqueness}, the coefficient functions of the self-exciting SDE \eqref{eq: SDE_SE} have locally bounded derivatives. Hence, the dominated convergence theorem, in combination with the definition of the derivative process $x_t$ in equation \eqref{eq: derivative_process}, implies that

$$
I_1 = E\big[\int_0^T \{ \frac{\partial h}{\partial x}x_t + \frac{\partial h}{\partial \pi} \beta_t \} dt \big].
$$

\noindent Similarly, the dominated convergence theorem also implies that

$$
I_2 = E[g'(X_T^u)x_T] = E[p_T x_t],
$$

\noindent where the final equality follows from the terminal condition of the BSDE \eqref{eq: BSDE}.

To calculate $E[p_T x_T]$, we use integration by parts (in the semi-martigale framework), see Protter \cite{Protter}, Corollary 2, page 68:
$$
d(p_t x_t) = x_{t-} dp_t + p_{t-} dx_t + d[p,x]_t.
$$

\noindent Recall that, from the adjoint BSDE \eqref{eq: BSDE}

\begin{equation}
\label{eq: mellom_1}
dp_t = - \frac{\partial \mathcal{H}_t}{\partial x} dt + q_t dB_t + w_{t-} dU_t
\end{equation}

\noindent and from the equation \eqref{eq: derivative_rewritten}
\begin{equation}
\label{eq: mellom_2}
dx_t = [\frac{\partial b_t}{\partial x} x_t + \frac{\partial b_t}{\partial \pi} \beta_t]dt + [\frac{\partial \sigma_t}{\partial x}x_t + \frac{\partial \sigma_t}{\partial \pi} \beta_t]dB_t + [\frac{\partial \gamma_{t-}}{\partial x}x_{t-} + \frac{\partial \gamma_{t-}}{\partial \pi} \beta_{t-}]dU_t, \quad x_0=0.
\end{equation}

\noindent Hence, from equations \eqref{eq: mellom_1} and \eqref{eq: mellom_2}, using similar arguments to the proof of Lemma \ref{lemma: quadratic_variation}

$$
\begin{array}{llll}

d[p,x]_t &=& q_t (\frac{\partial \sigma_t}{\partial x}x_t + \frac{\partial \sigma_t}{\partial \pi}\beta_t) dt + w_{t-} (\frac{\partial \gamma_{t-}}{\partial x} x_{t-} + \frac{\partial \gamma_{t-}}{\partial \pi} \beta_{t-}) d[U]_t. \\[\medskipamount]
\end{array}
$$
\smallskip

\noindent Then, from integration-by-parts

$$
\begin{array}{llll}
I_2 &=& E[p_Tx_T] \\[\medskipamount]
&=& E[\int_0^T -\frac{\partial \mathcal{H}_t}{\partial x}x_t dt] + E[\int_0^T x_{t-} w_{t-} dU_t] + E[\int_0^T p_t(\frac{\partial b_t}{\partial x} x_t + \frac{\partial b_t}{\partial \pi} \beta_t)dt] \\[\medskipamount]
&+& E[\int_0^T p_{t-} (\frac{\partial \gamma_{t-}}{\partial x} x_{t-} + \frac{\partial \gamma_{t-}}{\partial \pi} \beta_{t-})dU_t] + E[\int_0^T q_t (\frac{\partial \sigma_t}{\partial x} x_t + \frac{\partial \sigma_t}{\partial \pi} \beta_t) dt] \\[\medskipamount]
&+& E[\int_0^T w_{t-} (\frac{\partial \gamma_{t-}}{\partial x} x_{t-} + \frac{\partial \gamma_{t-}}{\partial \pi} \beta_{t-}) d[U]_t]
\end{array}
$$
\noindent From Lemma \ref{lemma: martingales}, we have $U_t = \bar{M}_t +  \sum_{i=1}^{N_t} Y_i \int_{T_{i-1}}^{T_i} \lambda_s ds$ and
$[U]_t = \tilde{M}_t - \sum_{i=1}^{N_t} Y_i^2 \int_{T_{i-1}}^{T_i} \lambda_s ds$. By inserting this into the expressions above, we find that

\begin{equation}
\label{eq: regning1}
\begin{array}{llll}
I_2 &=& E[\int_0^T -\frac{\partial \mathcal{H}_t}{\partial x}x_t dt] + E[\int_0^T x_t w_t \sum_{i=1}^{N_t} Y_i \boldsymbol{1}_{(T_{i-1},T_i]}(t) \lambda_t dt] \\[\medskipamount]
&+& E[\int_0^T p_t(\frac{\partial b_t}{\partial x} x_t + \frac{\partial b_t}{\partial \pi} \beta_t)dt] \\[\medskipamount]
&+& E[\int_0^T p_t (\frac{\partial \gamma_t}{\partial x} x_t + \frac{\partial \gamma_t}{\partial \pi} \beta_t)\sum_{i=1}^{N_t} Y_i \boldsymbol{1}_{(T_{i-1},T_i]}(t) \lambda_t dt] \\[\medskipamount]
&+& E[\int_0^T q_t (\frac{\partial \sigma_t}{\partial x} x_t + \frac{\partial \sigma_t}{\partial \pi} \beta_t) dt] \\[\medskipamount]
&+& E[\int_0^T w_{t} (\frac{\partial \gamma}{\partial x} x_t + \frac{\partial \gamma}{\partial \pi} \beta_t) \sum_{i=1}^{N_t} Y_i^2 \boldsymbol{1}_{(T_{i-1},T_i]}(t) \lambda_t dt]
\end{array}
\end{equation}

\noindent Also, note that

\begin{equation}
\label{eqref: Hamiltonian2}
\begin{array}{lllll}
    \mathcal{H}_t &=& h_t + \big( b_t + \sum_{i=1}^{N_t} Y_i \boldsymbol{1}_{(T_{i-1},T_i]}(t) \lambda_t \gamma_t \big) p_t 
     + \sigma_t q_t \\[\medskipamount]
     &&+ \lambda_t \big(\sum_{i=1}^{N_t} Y_i^2 \boldsymbol{1}_{(T_{i-1},T_i]}(t) \gamma_t+\sum_{i=1}^{N_t} Y_i \boldsymbol{1}_{(T_{i-1},T_i]}(t)x \big)w_t 
\end{array}
\end{equation}

\noindent Hence, 

\begin{equation}
\label{eq: hamiltonian_derivert}
\begin{array}{lllll}
\frac{\partial \mathcal{H}_t}{\partial x} &=& \frac{\partial h_t}{\partial x} + p_t \big( \frac{\partial b_t}{\partial x} + \frac{\partial \gamma_t}{\partial x} \sum_{i=1}^{N_t} Y_i \boldsymbol{1}_{(T_{i-1},T_i]}(t) \lambda_t \big) + q_t \frac{\partial \sigma_t}{\partial x} \\[\medskipamount]
&&+ w_t \lambda_t \big( \frac{\partial \gamma_t}{\partial x} \sum_{i=1}^{N_t} Y_i^2 \boldsymbol{1}_{(T_{i-1},T_i]}(t) +  \sum_{i=1}^{N_t} Y_i \boldsymbol{1}_{(T_{i-1},T_i]}(t) \big).
\end{array}
\end{equation}

\noindent We return to the calculation in equation \eqref{eq: regning1}, and collect the $x_t$-terms as well as the $\beta_{t-}$-terms. If we compare the resulting expression with the derivative of the Hamiltonian w.r.t. $x$, see equation \eqref{eq: hamiltonian_derivert}, we see that all $x_t$-terms cancel. Therefore, we find that

$$
\begin{array}{llll}
I_1 + I_2 &=& E[\int_0^T \beta_t \big( \frac{\partial h_t}{\partial \pi} + p_t \frac{\partial b_t}{\partial \pi} + p_t \frac{\partial \gamma}{\partial \pi} \sum_{i=1}^{N_t} Y_i \boldsymbol{1}_{(T_{i-1},T_i]}(t) \lambda_t + q_t \frac{\partial \sigma_t}{\partial \pi} \\[\bigskipamount]
&&+ w_t \frac{\partial \gamma_t}{\partial \pi} \sum_{i=1}^{N_t} Y^2_i \boldsymbol{1}_{(T_{i-1},T_i]}(t) \lambda_t \big)dt] \\[\bigskipamount]
&=& E[\int_0^T \beta_t \frac{\partial \mathcal{H}_t}{\partial \pi} dt] \quad \forall \quad \beta \in \mathcal{A}.
\end{array}
$$
\noindent where the last equality follows from the definition of the Hamiltonian, see equation \eqref{eqref: Hamiltonian2} (by differentiating w.r.t. $\pi$).  Hence, the implication from $(ii)$ to $(i)$  follows directly from this expression. The implication from $(i)$ to $(ii)$ follows by letting $\beta_t = \boldsymbol{1}_{[s,T]}(t) \kappa$, where $\kappa$ is bounded and measurable. Then, from $(i)$

$$
0=E[\int_s^T \frac{\partial \mathcal{H}_t}{\partial \pi} \kappa dt].
$$

This holds for all such $\kappa$ and all $s \in [0,T]$. Hence, the equivalence principle follows.

\end{proof}

\section{Application: Log-utility performance functional}\label{sec:Application}

In this section, we consider an application of the previous results. Consider the following linear self-exciting SDE:

\begin{equation}
\label{eq: SDE_SE_application}
    \begin{array}{llll}
        dX_t &=& X_{t-} \big( (\alpha_t - \pi_t)dt + \beta_t dB_t + \kappa_{t-} dU_t \big) \\
         X_0 &=& x_0 > 0.  
    \end{array}
\end{equation}
\noindent where $\alpha, \beta$ and $\kappa$ are bounded and $\mathbb{R}$-valued. In fact, we can find an explicit solution to this SDE for a given control process $\pi_t$. This is the result of the following lemma. 

\begin{lemma}(The solution of the self-exciting SDE \eqref{eq: SDE_SE_application})
\label{lemma: solution}
Consider the self-exciting SDE \eqref{eq: SDE_SE_application} for a given control $\pi_t$. The solution of this SDE is

\[
X_t = X_0 \exp{\big( \int_0^t \{ \alpha_s - \pi_s- \frac{1}{2}\beta_s^2 \} dt  - \frac{1}{2}\int_0^T \kappa_{s-}^2 d[U]_t + \int_0^t \beta_s dB_s + \int_0^t \kappa_{s-} dU_s  \big)}
\]

\end{lemma}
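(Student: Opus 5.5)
The plan is to verify directly that the proposed process solves \eqref{eq: SDE_SE_application}, and then to appeal to uniqueness. The second step uses Theorem \ref{thm: existence_unique}: the coefficients $b(t,x,\pi)=x(\alpha_t-\pi)$, $\sigma(t,x,\pi)=x\beta_t$ and $\gamma(t,x,\pi)=x\kappa_t$ are linear in $x$ with bounded coefficients, so Assumption \ref{assumption: existence_uniqueness} holds. I read the upper limit in the $d[U]$ integral as $t$ rather than $T$, since otherwise $X_t$ would not be adapted.

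Write $X_t=X_0e^{Z_t}$ with
\[
Z_t:=\int_0^t \{\alpha_s-\pi_s-\tfrac12\beta_s^2\}ds-\tfrac12\int_0^t\kappa_{s-}^2d[U]_s+\int_0^t\beta_s dB_s+\int_0^t\kappa_{s-}dU_s .
\]
First I compute the quadratic variation of $Z$ by the same argument as in Lemma \ref{lemma: quadratic_variation}. The $ds$ and $d[U]_s$ parts have finite variation and do not contribute. The cross terms between $B$ and $U$ vanish because $U$ has finite variation. This leaves
\[
d[Z]_t=\beta_t^2dt+\kappa_{t-}^2d[U]_t .
\]

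Next I apply the semimartingale It\^o formula (Protter \cite{Protter}) to $f(z)=X_0e^{z}$, keeping terms up to second order as in the computation of Lemma \ref{lemma: quadratic_variation}:
\[
dX_t=X_{t-}\,dZ_t+\tfrac12X_{t-}\,d[Z]_t .
\]
Substituting $dZ_t$ and $d[Z]_t$ into this expression, the $-\tfrac12\beta_t^2dt$ term cancels against $+\tfrac12\beta_t^2dt$. Likewise the $-\tfrac12\kappa_{t-}^2d[U]_t$ term cancels against $+\tfrac12\kappa_{t-}^2d[U]_t$. What remains is $X_{t-}\big((\alpha_t-\pi_t)dt+\beta_tdB_t+\kappa_{t-}dU_t\big)$. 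Together with $X_0=x_0$, this is exactly \eqref{eq: SDE_SE_application}, and uniqueness then identifies the solution.

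The main obstacle is the jump part of the It\^o formula. The exact version contains the remainder
\[
\sum_{s\le t}X_{s-}\big(e^{\Delta Z_s}-1-\Delta Z_s-\tfrac12(\Delta Z_s)^2\big)
\]
when the second-order term $\tfrac12 X_{s-}d[Z]_s$ is written out in full. The stated formula is justified precisely to the extent that this remainder is neglected, which is the same second-order bookkeeping ($(dU)^2=d[U]$, higher powers dropped) used in Lemma \ref{lemma: quadratic_variation}. I would state this explicitly in the proof, noting that the remainder is $O((\kappa Y)^3)$ per jump and that the formula is exact for the continuous part.
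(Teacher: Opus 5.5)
Your verification step fails. The remainder you flag at the end cannot be ``stated explicitly and neglected'': it is genuinely nonzero, so the candidate process does not solve \eqref{eq: SDE_SE_application}, and uniqueness has nothing to identify. The quickest check is to compare jumps at a jump time $T_i$. With $u_i:=\kappa_{T_i-}Y_i$, the SDE forces $X_{T_i}=X_{T_i-}(1+u_i)$.

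Your $Z$ jumps by $\Delta Z_{T_i}=u_i-\tfrac12u_i^2$, because $-\tfrac12\int\kappa_{s-}^2\,d[U]_s$ is pure-jump. So, contrary to your computation of $[Z]$, finite-variation parts do contribute to $[Z]$ through their jumps; you implicitly use this when you keep $\kappa_{t-}^2\,d[U]_t$ from the finite-variation integral $\int\kappa_{s-}\,dU_s$. Hence the candidate gives $X_{T_i}=X_{T_i-}e^{u_i-u_i^2/2}$.

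But $e^{u-u^2/2}=1+u$ only for $u=0$. For $u\le-1$ the right side is nonpositive. On $(-1,\infty)$ the function $u-\tfrac12u^2-\ln(1+u)$ has derivative $-u^2/(1+u)$, so it is strictly decreasing with its only zero at $u=0$. The per-jump error is $-u_i^3/3+O(u_i^4)$, and nothing makes the $Y_i$ small. So the lemma as written is false whenever $\kappa_{T_i-}Y_i\neq0$ with positive probability.

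Your appeal to Lemma \ref{lemma: quadratic_variation} does not rescue it. That identity is exact because the covariation is bilinear in the jumps, whereas $e^{z}$ (or $\ln x$) sees every power of $\Delta Z$.

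The paper's proof makes the same truncation in a different guise. It applies It\^o's formula to $\ln X$ with $-\frac{1}{2X_{t-}^2}\,d[X]_t$ (full $[U]$) as the only correction, silently dropping $\sum_{s\le t}\{\ln(1+\kappa_{s-}\Delta U_s)-\kappa_{s-}\Delta U_s+\tfrac12\kappa_{s-}^2(\Delta U_s)^2\}$. What is true is the Dol\'eans-Dade exponential
\[
X_t=x_0\exp\Big(\int_0^t\{\alpha_s-\pi_s-\tfrac12\beta_s^2\}\,ds+\int_0^t\beta_s\,dB_s\Big)\prod_{i=1}^{N_t}\big(1+\kappa_{T_i-}Y_i\big).
\]
That is, the terms $\int_0^t\kappa_{s-}\,dU_s-\tfrac12\int_0^t\kappa_{s-}^2\,d[U]_s$ in the exponent must be replaced by $\sum_{i=1}^{N_t}\ln(1+\kappa_{T_i-}Y_i)$. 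This also requires $1+\kappa_{T_i-}Y_i>0$ for $X$ to stay positive.

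Your verify-then-uniqueness plan does prove this corrected formula. Use the exact It\^o formula: continuous part $[Z]^c_t=\int_0^t\beta_s^2\,ds$ plus the jump sum $\sum_{s\le t}X_{s-}(e^{\Delta Z_s}-1-\Delta Z_s)$. Then invoke uniqueness of the stochastic exponential for a fixed control. Note that this, not Theorem \ref{thm: existence_unique}, is what you need: $x(\alpha_t-\pi)$ does not have a Lipschitz constant uniform in $\pi$, as Assumption \ref{assumption: existence_uniqueness} demands, unless $\mathcal V$ is bounded.

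The correct reading of the remainder you computed was that the stated formula must be fixed, not that the remainder may be ignored.
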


\begin{proof}
By choosing $f(t,x)=\ln(x)$ and using Itô's formula, see Protter \cite{Pr}, we get that

\[
\begin{array}{llll}
df(t,X_t) &=& \frac{1}{X_t}dX_t - \frac{1}{2X_{t-}^2}\big( X_t^2 \beta_t^2 dt + X_{t-}^2 \kappa_{t-}^2 d[U]_t \big) \\[\smallskipamount]
&=& \frac{1}{X_t} X_t\big( (\alpha_t - \pi_t )dt + \beta_t dB_t + \kappa_{t-} dU_t  \big) - \frac{1}{2}\big( \beta_t^2 dt + \kappa_{t-}^2 d[U]_t  \big) 
\end{array}
\]
\noindent where the last equality follows from the definition of the self-exciting SDE \eqref{eq: SDE_SE_application}. Hence,

\[
 d(\ln X_t) = \big( (\alpha_t - \pi_t) - \frac{1}{2}\beta_t^2 \big)dt - \frac{1}{2}\kappa_{t-}^2 d[U]_t + \beta_t dB_t + \kappa_{t-} dU_t
\]
\noindent By integrating both sides, rearranging and taking exponentials, we get 
\[
X_t = X_0 \exp{\big( \int_0^t \{\alpha_s - \pi_s - \frac{1}{2}\beta_s^2 \}dt  - \frac{1}{2}\int_0^T \kappa_{s-}^2 d[U]_t + \int_0^t \beta_s dB_s + \int_0^t \kappa_{s-} dU_s  \big)}
\]

\end{proof}

We can derive an explicit solution to the particular kind of linear self-exciting BSDE coming from the self-exciting stochastic optimal control problem which corresponds to the case when the state equation \eqref{eq: SDE_SE} is linear. 

\begin{lemma}(Solution of the linear self-exciting BSDE)
\label{thm: BSDE_linear}
Consider a self-exciting linear BSDE of the form

\begin{equation}
\label{eq: BSDE_linear}
    \begin{array}{llll}
        dp_t &=& -g(t,p_t, q_t,w_t) dt + q_t dB_t + w_{t-} d U_t \\[\medskipamount]
         p_T &=& F. 
    \end{array}
\end{equation}
\noindent where $F$ is $\mathcal{F}_T$-measurable and where
\[
\begin{array}{llll}
g(t,p_t,q_t,w_t) &=& \varphi_t + (\alpha_t + \sum_{i=1}^{N_t} Y_i \boldsymbol{1}_{(T_{i-1},T_i]}(t)\lambda_t\kappa_t) p_t + \beta_t q_t \\[\medskipamount] 
&&+ w_{t} \lambda_{t} \sum_{i=1}^{N_t}( Y_i + Y_i^2 \kappa_t)\boldsymbol{1}_{(T_{i-1},T_i]}(t)     
\end{array}
\]
Then, the first component of the solution of the BSDE \eqref{eq: BSDE_linear} is given by

\[
p_t = E \big[ \frac{\Gamma_T}{\Gamma_t}F + \int_t^T \frac{\Gamma_s}{\Gamma_t} \varphi_s ds \vert \mathcal{F}_t \big]
\]

\noindent where $\Gamma_t$ is the solution of the linear SDE

\[
    \begin{array}{llll}
        d\Gamma_t &=& \Gamma_{t-} \big( \alpha_t dt + \beta_t dB_t + \kappa_t dU_t \big) \\[\medskipamount]
         \Gamma_0 &=& 1. 
    \end{array}
\]
\end{lemma}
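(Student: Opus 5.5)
The plan is the standard linear-BSDE argument: compute $d(\Gamma_t p_t)$ by the semimartingale integration-by-parts formula (Protter, Corollary 2, p.~68), show that the $dt$-part of $\Gamma_t p_t + \int_0^t \Gamma_s\varphi_s ds$ vanishes up to martingale increments, and then take conditional expectations between $t$ and $T$. Writing $\bar{X}_t := \sum_{i=1}^{N_t} Y_i \boldsymbol{1}_{(T_{i-1},T_i]}(t)$ and $\tilde{X}_t := \sum_{i=1}^{N_t} Y_i^2 \boldsymbol{1}_{(T_{i-1},T_i]}(t)$ as in the proof of Lemma \ref{lemma: martingales}, we have
\[
d(\Gamma_t p_t) = p_{t-}d\Gamma_t + \Gamma_{t-}dp_t + d[\Gamma,p]_t .
\]
Arguing exactly as in Lemma \ref{lemma: quadratic_variation}, the covariation is
\[
d[\Gamma,p]_t = \Gamma_{t-}\beta_t q_t\,dt + \Gamma_{t-}\kappa_t w_{t-}\,d[U]_t .
\]

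Next, substitute the dynamics of $\Gamma$ and of $p$ from \eqref{eq: BSDE_linear}. This gives
\[
d(\Gamma_t p_t) = \Gamma_{t-}\Big[(\alpha_t p_t - g_t + \beta_t q_t)dt + (\beta_t p_t + q_t)dB_t + (\kappa_t p_{t-} + w_{t-})dU_t + \kappa_t w_{t-}d[U]_t\Big].
\]
Now apply Lemma \ref{lemma: martingales} to replace $dU_t = d\bar{M}_t + \bar{X}_t\lambda_t dt$ and $d[U]_t = d\tilde{M}_t + \tilde{X}_t\lambda_t dt$. The resulting $dt$-coefficient is
\[
\Gamma_t\big(\alpha_t p_t + \beta_t q_t + \kappa_t p_t \bar{X}_t\lambda_t + w_t\bar{X}_t\lambda_t + \kappa_t w_t \tilde{X}_t\lambda_t - g_t\big).
\]
By the definition of $g$, this is exactly $-\Gamma_t\varphi_t$. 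This cancellation is the heart of the proof, and it is the reason the driver $g$ contains the terms $\bar{X}_t\lambda_t\kappa_t$ and $\lambda_t(Y_i+Y_i^2\kappa_t)$.

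Consequently,
\[
\Gamma_t p_t + \int_0^t \Gamma_s\varphi_s ds = p_0 + \int_0^t \Gamma_{s-}(\beta_s p_s + q_s)dB_s + \int_0^t \Gamma_{s-}(\kappa_s p_{s-} + w_{s-})d\bar{M}_s + \int_0^t \Gamma_{s-}\kappa_s w_{s-}d\tilde{M}_s .
\]
Evaluating at $T$ and at $t$, subtracting, using $p_T=F$, and taking $E[\cdot\,|\,\mathcal{F}_t]$ kills the stochastic-integral increments. This leaves
\[
\Gamma_t p_t = E\big[\Gamma_T F + \textstyle\int_t^T \Gamma_s\varphi_s ds \,\big|\, \mathcal{F}_t\big].
\]
Since $\Gamma_t$ is $\mathcal{F}_t$-measurable, dividing by $\Gamma_t$ gives the claimed formula.

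For the division to be legitimate, $\Gamma_t$ must be strictly positive. As in Lemma \ref{lemma: solution}, $\Gamma$ is a stochastic exponential whose jumps are $\Gamma_{t-}\kappa_t\Delta U_t$. So I would assume $1+\kappa_t\Delta U_t>0$, which holds for instance when $\kappa\ge 0$ and the jumps $Y_i$ are nonnegative.

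The main obstacle is justifying that the three stochastic integrals are true martingales rather than merely local martingales. Theorem \ref{thm: bremaud} and Lemma \ref{lemma: martingales} apply only to predictable integrands satisfying an integrability condition of the type $E[\int_0^T |\Gamma_{s-}(\kappa_s p_{s-}+w_{s-})|\,|Y_{N_s}|\lambda_s ds]<\infty$, together with the analogous condition involving $Y^2$. I would first try to impose these as standing integrability assumptions on $(p,q,w)$ and $\Gamma$, using the boundedness of $\alpha,\beta,\kappa$ to control moments of $\Gamma$. Failing that, the fallback is to localize along a sequence of stopping times and pass to the limit by dominated convergence, using $\Gamma\in L^2$ and $F,\varphi$ square integrable.
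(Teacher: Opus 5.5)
Your proposal is correct and follows essentially the same route as the paper. It uses integration by parts for $\Gamma_t p_t$, the covariation $d[\Gamma,p]_t=\Gamma_{t-}(\beta_t q_t\,dt+\kappa_t w_{t-}\,d[U]_t)$ as in Lemma~\ref{lemma: quadratic_variation}, the substitution of $U$ and $[U]$ via $\bar{M}$ and $\tilde{M}$ from Lemma~\ref{lemma: martingales} so that the $dt$-terms collapse to $-\Gamma_t\varphi_t\,dt$, and then conditioning on $\mathcal{F}_t$. Your extra remarks address points the paper's proof passes over without comment: $\Gamma$ must not vanish for the final division (e.g.\ $1+\kappa_t\Delta U_t>0$), and the $dB$, $d\bar{M}$ and $d\tilde{M}$ integrals need an integrability or localization argument to be true martingales.
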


\begin{proof}
The proof is similar to that of {\O}ksendal and Sulem \cite{oksendal2015risk}, Theorem 2.7. By integration by parts (see Protter \cite{Pr}), we find

\[
\begin{array}{llllll}
d\big( \Gamma_t p_t \big) &=& \Gamma_{t-} dp_t+ p_{t-}d\Gamma_t +d \big[ \Gamma, p \big]_t \\[\medskipamount]
&=& \Gamma_{t-}\big( -(\varphi_t + (\alpha_t + \sum_{i=1}^{N_t} Y_i \boldsymbol{1}_{(T_{i-1},T_i]}(t)\lambda_t\kappa_t) p_t + \beta_t q_t \\[\medskipamount] 
&&+ w_{t} \lambda_{t} \sum_{i=1}^{N_t}( Y_i + Y_i^2 \kappa_t)\boldsymbol{1}_{(T_{i-1},T_i]}(t))dt + q_t dB_t + w_{t-} dU_t \big)\\[\medskipamount]
&& + p_{t-}\Gamma_{t-} \big( \alpha_t dt + \beta_t dB_t  + \kappa_t dU_t \big) + \Gamma_{t-} (q_t \beta_t dt + \kappa_t w_{t-} d[U]_t)\\[\medskipamount]
&=& \Gamma_{t-}\big(-\varphi_tdt + (q_t+p_t \beta_t) dB_t \\[\medskipamount] 
&&+ (w_{t-} + \kappa_t p_t) d\bar{M}_t + \kappa_t w_{t-}d\tilde{M}_t\big) 
\end{array}
\]
\noindent where the second equality follows from the SDE for $\Gamma$ and the BSDE for $p$. The third equality follows by cancelling terms and using Lemma \ref{lemma: martingales}. From this, $\Gamma_t p_t + \int_0^t \Gamma_s\varphi_sds$ is a martingale, so

\[
\begin{array}{lllll}
\Gamma_t p_t + \int_0^t \Gamma_s\varphi_sds &=& E \big[ \{ \Gamma_Tp_T + \int_0^T \Gamma_s\varphi_sds \}\vert \mathcal{F}_t   \big] \\[\medskipamount]
&=& E \big[ \{ \Gamma_TF + \int_0^T \Gamma_s\varphi_sds \}\vert \mathcal{F}_t   \big]
\end{array}
\]
\noindent from the BSDE \eqref{eq: BSDE_linear}. This is the result of the lemma.
\end{proof}

We consider a log-utility performance functional and hence would like to solve the following stochastic optimal control problem

\begin{equation}
\label{eq: OptControl_application}
    \max_{\pi \in \mathcal{A}} \quad J(\pi) = E[\int_0^T \ln (X_t \pi_t)dt + \theta \ln (X_T)].
\end{equation}

\noindent The corresponding Hamiltonian is

\[
\begin{array}{llll}
\mathcal{H}_t &=& \ln(x_t \pi_t) + \big( x_t(\alpha_t - \pi_t) + \sum_{i=1}^{N_t} Y_i \boldsymbol{1}_{(T_{i-1}, T_i]}(t) \lambda_t \kappa_t x_t \big)p_t + \beta_t x_t q_t \\
&&+ \lambda_t w_t \big( \sum_{i=1}^{N_t} Y_i^2 \boldsymbol{1}_{(T_{i-1}, T_i]}(t) \kappa_t x_t + \sum_{i=1}^{N_t} Y_i \boldsymbol{1}_{(T_{i-1}, T_i]}(t) x_t \big) 
\end{array}
\]

\noindent Hence, from equation \eqref{eq: BSDE}, the corresponding self-exciting adjoint BSDE is

\begin{equation}
\label{eq: BSDE_application}
    \begin{array}{llll}
        dp_t &=& -\frac{\partial \mathcal{H}}{\partial x} dt + q_t dB_t + w_{t-} d U_t \\[\medskipamount]
         p_T &=& \frac{\theta}{X_T}
    \end{array}
\end{equation}

\noindent where 

\[
\begin{array}{lll}
\frac{\partial \mathcal{H}}{\partial x} &=& \frac{1}{X_t} +(\alpha_t - \pi_t) p_t + \lambda_t w_t \big( \sum_{i=1}^{N_t} Y_i \boldsymbol{1}_{(T_{i-1},T_i]}(t) 
+ \sum_{i=1}^{N_t} Y_i^2 \boldsymbol{1}_{(T_{i-1}, T_i]}(t) \kappa_t \big) \\
&&+  \lambda_t \kappa_t p_t \sum_{i=1}^{N_t} Y_i \boldsymbol{1}_{(T_{i-1}, T_i]}(t) 
+ \beta_t q_t 
\end{array}
\]

\noindent Furthermore, note that

\[
\frac{\partial \mathcal{H}}{\partial \pi} = \frac{1}{\pi_t}
 - p_t X_t
\]
  
\noindent Setting this equal to zero and by using the sufficient maximum principle, Theorem \ref{thm: suff_max_princ}, we get that the optimal control is given by

\begin{equation}
\label{eq: hat_pi}
\hat{\pi}_t = \frac{1}{X_t p_t}
\end{equation}

\noindent Note that from Lemma \ref{lemma: solution}, if we are given a control process $\pi_t$, then we have an explicit expression for $X_t$. By using Lemma \ref{thm: BSDE_linear}, we can derive an expression for the adjoint process $p_t$. However, this expression will depend also on the control process (and on $X_T$, which again is determined by the control). This is the result of the following lemma.

\begin{lemma}(Solution of the adjoint BSDE)
\label{lemma: solution_BSDE}
Consider the adjoint self-exciting BSDE \eqref{eq: BSDE_application}. The solution of the BSDE is 
\[
p_t = \frac{E [ \theta   \vert \mathcal{F}_t ] + T-t}{X_t},
\]
where $X$ is given by \eqref{eq: SDE_SE_application}.
\end{lemma}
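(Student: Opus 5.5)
The plan is to obtain the formula from the explicit representation of Lemma \ref{thm: BSDE_linear}. I will not solve \eqref{eq: BSDE_application} from scratch.

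\textbf{Step 1: match the coefficients.} Since $\mathcal{H}$ is linear in $x$, the driver $\frac{\partial \mathcal{H}}{\partial x}$ has exactly the structure of $g(t,p_t,q_t,w_t)$ in Lemma \ref{thm: BSDE_linear}. The identification is
\[
\varphi_t = \frac{1}{X_t},
\]
with $\alpha_t$ replaced by $\alpha_t-\pi_t$ and with the same $\beta_t,\kappa_t$. The terminal condition is $F=\theta/X_T$. Lemma \ref{thm: BSDE_linear} then gives
\[
p_t = E\Big[\frac{\Gamma_T}{\Gamma_t}\frac{\theta}{X_T} + \int_t^T \frac{\Gamma_s}{\Gamma_t}\frac{1}{X_s}\,ds \,\Big\vert\, \mathcal{F}_t\Big],
\]
where $\Gamma$ solves $d\Gamma_t=\Gamma_{t-}\big((\alpha_t-\pi_t)dt+\beta_t dB_t+\kappa_t dU_t\big)$ with $\Gamma_0=1$.

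\textbf{Step 2: identify $\Gamma$.} This SDE is, coefficient by coefficient, the state equation \eqref{eq: SDE_SE_application}. By uniqueness (Theorem \ref{thm: existence_unique}, or directly from the explicit formula in Lemma \ref{lemma: solution}), $\Gamma_t = X_t/x_0$. Hence $\Gamma_s/\Gamma_t = X_s/X_t$ for all $s\ge t$. Substituting, the ratios $\Gamma_T/X_T$ and $\Gamma_s/X_s$ collapse to $1/X_t$, which is $\mathcal{F}_t$-measurable and can be pulled out of the conditional expectation:
\[
p_t = \frac{1}{X_t}\Big(E[\theta\vert\mathcal{F}_t] + \int_t^T ds\Big) = \frac{E[\theta\vert\mathcal{F}_t]+T-t}{X_t}.
\]

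\textbf{Main obstacle.} The delicate point is justifying the identification $\Gamma = X/x_0$. The driver of the BSDE is written with $p_t$ and $\kappa_t$, while the state SDE uses $X_{t-}$ and $\kappa_{t-}$. I will argue these agree because the $dU$-integrals see only left limits, and the $dt$-parts differ only on the countable jump set.

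A second point is that Lemma \ref{thm: BSDE_linear} implicitly requires the stochastic integrals against $\bar{M}$ and $\tilde{M}$ to be true martingales. I will assume the integrability needed to apply Lemma \ref{lemma: martingales} and Theorem \ref{thm: bremaud}, which is plausible since $\alpha,\beta,\kappa$ are bounded. Finally, as a consistency check, I will verify directly with Itô's product rule that $p_t X_t = E[\theta\vert\mathcal{F}_t]+T-t$ has drift $-1$ and terminal value $\theta$, as the BSDE requires.
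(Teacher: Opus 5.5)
Your proposal is correct and is essentially the paper's proof: apply Lemma~\ref{thm: BSDE_linear} with drift $\alpha_t-\pi_t$, $\varphi_t = 1/X_t$ and $F=\theta/X_T$, identify $\Gamma$ with the state process, and let the ratios collapse to $1/X_t$, which you then pull out of the conditional expectation. Your normalization $\Gamma = X/x_0$ is slightly more careful than the paper's $\Gamma = X$, which conflicts with $\Gamma_0=1$; since only the ratios $\Gamma_s/\Gamma_t$ enter, the result is the same.
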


\begin{proof}
Let $\Gamma = X$, with the drift $\alpha_t - \pi_t$, $\varphi_t= X_t^{-1}$ and $F=\theta X_T^{-1}$. Then, by Lemma \ref{thm: BSDE_linear} by inserting the framework of the current log-utility application it follows that  
\[
p_t = E \big[ \frac{X_T}{X_t}\frac{\theta}{X_T} + \int_t^T \frac{X_s}{X_t}\frac{1}{X_s} ds\vert \mathcal{F}_t \big].
\]
\end{proof}
It follows that 

\[
\hat{\pi}_t = \frac{1}{E \big[ \theta  \vert \mathcal{F}_t  \big] + T-t}. 
\]
Note that this is the same as in the non-self-exciting case, see e.g. Agram et al. \cite{cherif2020stochastic}. The reason we end up with the same result as in Agram et al. \cite{cherif2020stochastic}, although the state process is more complicated in our framework, is the choice of the log-utility performance functional which simplifies the calculations significantly. 

\bibliographystyle{plain}
\bibliography{SE_control.bib}     

\end{document}